\newtheorem{thm}{Theorem}[section]
\newtheorem{cor}[thm]{Corollary}
\newtheorem{lem}[thm]{Lemma}
\numberwithin{equation}{section}
\begin{document}

\title{the g-Drazin invertibility in a Banach algebra}

\author{Huanyin Chen}
\author{Marjan Sheibani$^*$}
\address{
Department of Mathematics\\ Hangzhou Normal University\\ Hang -zhou, China}
\email{<huanyinchenhz@163.com>}
\address{Farzanegan Campus, Semnan University, Semnan, Iran}
\email{<m.sheibani@semnan.ac.ir>}

\subjclass[2010]{16U99, 15A09, 47C05.} \keywords{g-Drazin inverse; anti-triangular matrix; operator matrix; Banach algebra.}

\begin{abstract} We present necessary and sufficient conditions under which the anti-triangular matrix $\left(
  \begin{array}{cc}
    a&b\\
    1&0
  \end{array}
\right)$ over a Banach algebra has g-Drazin inverse. New additive results for g-Drazin inverse are obtained. Then we apply our results to $2\times 2$ operator matrices and generalize many known results, e.g.,~\cite[Theorem 2.2]{D}, ~\cite[Theorem 2.1]{YL} and ~\cite[Theorem 4.1]{Y}.
\end{abstract}

\thanks{Corresponding author: Marjan Sheibani}

\maketitle

\section{Introduction}

Let $\mathcal{A}$ be a Banach algebra with an identity. An element $a$ in $\mathcal{A}$ has g-Drazin inverse provided that there exists some $b\in \mathcal{A}$ such that $$b=bab, ab=ba, a-a^2b\in \mathcal{A}^{qnil}.$$ Here, $\mathcal{A}^{qnil}=\{a\in \mathcal{A}~|~1+ax\in \mathcal{A}^{-1}~\mbox{whenever}~ax=xa\}$. That is,
$x\in \mathcal{A}^{qnil}\Leftrightarrow \lim\limits_{n\to\infty}\parallel x^n\parallel^{\frac{1}{n}}=0.$ We say that $a\in \mathcal{A}$ has Drazin inverse $a^D$ if $\mathcal{A}^{qnil}$ is replaced by the set $\mathcal{A}^{nil}$ of all nilpotents in $\mathcal{A}$. As is well known, $a\in \mathcal{A}$ has Drazin (g-Drazin) inverse if and only if there exists an idempotent $e\in \mathcal{A}$ such that $ae=ea, a-e\in \mathcal{A}$ is invertible and $ae\in \mathcal{A}^{nil} (\mathcal{A}^{qnil}).$ The Drazin and g-Drazin inverses play important roles in matrix and operator theory. They also were extensively studied in ring theory under strongly $\pi$-regularity and quasipolarity (see~\cite{CC,CC1,D,S,XS,Z,ZM,ZC}).

The solutions to singular systems of differential equations are determined by the the Drazin (g-Drazin) inverses of certain anti-triangular block complex matrices (see~\cite{B}). This inspires to investigate the Drazin (g-Drazin) invertibility for the anti-triangular matrix $M=\left(
\begin{array}{cc}
a&1\\
b&0
\end{array}
\right)\in M_2({\mathcal{A}})$ with $a,b\in \mathcal{A}^d$. In ~\cite{P}, Patr\'icio and Hartwig considered the case $ab=ba$ for the Drazin inverse of $M$. For bounded linear operators on Banach spaces, a new expression of $M^D$ was given under the same condition (see~\cite[Theorem 3.8]{XS}). Also Bu et al. gave the alternative representation of $M^D$ for subblock complex matrices.
In ~\cite[Theorem 2.3]{ZM}, Zhang and Mosi\'c presented the g-Drazin inverse of $M$ under the condition $bab^{\pi}=0.$
In~\cite[Theorem 2.6]{Z} the g-Drazin inverse $M^d$ under the conditions $b^dab^{\pi}=0, b^{\pi}ba=0$ has been investigated. For the anti-triangular operator matrix $M$ over a complex Hilbert space, Yu and Deng characterized its Drazin inverse under wider conditions $b^{\pi}ab^D=0, b^{\pi}ab=b^{\pi}ba$ and $b^{\pi}ab^D=0, b^{\pi}ab(b^{\pi}a)^{\pi}=0, (b^{\pi}a)^Db^{\pi}ab=0$ (see~\cite[Theorem 4.1]{Y}). These conditions were also considered in~\cite[Theorem 2.12]{ZCM}.

In Section 2, we present necessary and sufficient conditions under which the anti-triangular matrix $\left(
  \begin{array}{cc}
    a&1\\
    b&0
  \end{array}
\right)$ over a Banach algebra has g-Drazin inverse. ~\cite[Theorem 4.1]{Y} and ~\cite[Theorem 2.12]{ZCM} are thereby extended to a more general setting.

Let $a,b\in \mathcal{A}^d$. Many authors have studied when $a+b\in \mathcal{A}$ has Drazin (g-Drazin) inverse. In~\cite[Theorem 2.1]{Y}, Yang and Liu
considered the conditions $ab^2=0$ and $aba=0$. In ~\cite[Theorem 2.4]{CM1}, the authors extend to the conditions $ab^2=0$ and $b^{\pi}aba=0$.
In ~\cite[Theorem 3.1]{S}, for the setting of complex matrices, Shakoor et al. investigated the Drazin inverse of $a+b$ under the conditions
$ab^2=0$ and $a^2ba=0$. These conditions were also considered in ~\cite[Theorem 3.1]{SZ}. We refer the reader for more related papers, e.g.,~\cite{X,YL,ZM,ZC}.

In Section 3, we apply our results to establish some new additive results. Let $a,b,ab\in \mathcal{A}^d$. If $ab^2=0,(ab)^{\pi}a(ab)^d=0~\mbox{and}~(ab)^{\pi}aba=0,$ we prove that $a+b\in \mathcal{A}^d$. This also extends the existing results above.

Let $X,Y$ be Banach spaces and $M=\left(
\begin{array}{cc}
A&B\\
C&D
\end{array}
\right)~~~~~(*)$, where $A\in \mathcal{L}(X), B\in \mathcal{L}(X,Y), C\in \mathcal{L}(Y, X)$ and $D\in \mathcal{L}(Y)$. Then $M$ is a bounded linear operator on $X\oplus Y$. Finally, in the last section, we split $M$ into the sum of two block operator matrices. We then establish new results for the g-Drazin inverse of $2\times 2$ block operator matrix $M$. These also recover some known results, e.g., ~\cite[Theorem 2.2]{D}.

Throughout the paper, we use $\mathcal{A}^{d}$ to denote the set of all g-Drazin invertible elements in $\mathcal{A}$. Let $a\in \mathcal{A}^d$. The spectral idempotent $1-aa^d$ is denoted by $a^{\pi}$. $\mathcal{L}(X)$ denotes the Banach algebra of all bounded linear operators on the Banach space $X$. ${\Bbb C}^{n\times n}$ stands for the Banach algebra of all $n\times n$ complex matrices.

\section{anti-triangular matrices over Banach algebra}

The aim of this section is to investigate the g-Drazin invert of the operator matrix $\left(
  \begin{array}{cc}
    a&b\\
    1&0
  \end{array}
\right)$ over a Banach algebra $\mathcal{A}.$ We begin with

\begin{lem} (see~\cite[Lemma 1.3]{ZM}) Let $a,b\in \mathcal{A}^d$. If $ab=0$, then $a+b\in \mathcal{A}^d$ and $$(a+b)^d=\sum\limits_{i=0}^{\infty}b^ib^{\pi}(a^d)^{i+1}+\sum\limits_{i=0}^{\infty}(b^d)^{i+1}a^ia^{\pi}.$$
\end{lem}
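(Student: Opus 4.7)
The plan is to verify directly that the proposed element
$$x := \sum_{i\geq 0} b^i b^\pi (a^d)^{i+1} + \sum_{i\geq 0} (b^d)^{i+1} a^i a^\pi$$
satisfies the three axioms of the g-Drazin inverse of $a+b$. First I will extract the algebraic consequences of $ab=0$: multiplying by $(a^d)^2$ on the left gives $a^d b=0$, while multiplying by $(b^d)^2 b$ on the right gives $ab^d=0$; hence $a^\pi b=b$, $ab^\pi=a$, and $ab^i=0$ for all $i\geq 1$. Absolute convergence of both series then follows from the identities $(aa^\pi)^i=a^i a^\pi$ and $(bb^\pi)^i=b^i b^\pi$ together with the quasinilpotency of $aa^\pi$ and $bb^\pi$, which give $\|a^i a^\pi\|^{1/i},\|b^i b^\pi\|^{1/i}\to 0$.

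Writing $x=x_1+x_2$ for the two series and using the identities above, a direct calculation gives $ax_1=aa^d$, $ax_2=0$, $x_1 b=0$, $x_2 b=bb^d$; the remaining cross products $bx_i$, $x_ia$ telescope to shifted tails of the same two series. Summation produces
$$(a+b)x = aa^d + bb^d a^\pi + \sum_{j\geq 1} b^j b^\pi (a^d)^j + \sum_{j\geq 1} (b^d)^j a^j a^\pi = 1-b^\pi a^\pi+R,$$
with an identical expression for $x(a+b)$ after the single-term cross commutators $+bb^d aa^d$ and $-bb^d aa^d$ produced by $x_1$ and $x_2$ cancel. A parallel telescoping computation gives $x(a+b)x=x$, so $E:=(a+b)x$ is an idempotent.

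The main obstacle is to show $w := (a+b)-(a+b)^2 x = (a+b)(1-E) \in \mathcal{A}^{qnil}$. I would handle this by Pierce-decomposing $\mathcal{A}$ along $p = aa^d$: the identity $pb=a(a^db)=0$ puts $a+b$ into block lower-triangular form
$$a+b = \begin{pmatrix} pa & 0 \\ bp & aa^\pi + bq \end{pmatrix}, \qquad q:=1-p,$$
with $pa$ invertible in $p\mathcal{A}p$, $aa^\pi\in(q\mathcal{A}q)^{qnil}$, and $(aa^\pi)(bq) = q(ab)q = 0$. Combining the standard g-Drazin formula for a lower-triangular block matrix with the simpler ``one quasinilpotent summand'' case of the same addition formula inside $q\mathcal{A}q$ (verified there by a direct power-series expansion, since $aa^\pi$ has vanishing Drazin inverse) reassembles exactly the expression for $x$ and places $w$ entirely inside the corner $q\mathcal{A}q$, where it lies in $\mathcal{A}^{qnil}$. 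A fully direct alternative uses that $(a+b)b^\pi a^\pi = aa^\pi + bb^\pi a^\pi$ (from $ab^\pi=a$), with $aa^\pi\cdot bb^\pi a^\pi = (ab)b^\pi a^\pi = 0$ and $(bb^\pi a^\pi)^n = b^n b^\pi a^\pi$ (iterating $a^\pi bb^\pi=bb^\pi$), so that $\|(aa^\pi+bb^\pi a^\pi)^n\|$ is bounded by a convolution of two spectral-radius-zero sequences; the contribution of $(a+b)R$ is controlled by the same index-shift.
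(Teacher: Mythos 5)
The paper does not actually prove this statement: it is quoted verbatim as \cite[Lemma 1.3]{ZM} and used as a black box, so there is no in-paper argument to compare against. Your outline is essentially the standard literature proof of this additive result (Pierce decomposition along $p=aa^d$, reduction of the $(2,2)$-corner to the ``quasinilpotent plus g-Drazin invertible with zero product'' case), and the algebraic preliminaries you derive from $ab=0$ (namely $a^db=0$, $ab^d=0$, $a^\pi b=b$, $ab^\pi=a$), the convergence argument, and the verifications $ax_1=aa^d$, $ax_2=0$, $x_1b=0$, $x_2b=bb^d$, $(a+b)x=x(a+b)=1-b^\pi a^\pi+R$ are all correct.

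The one place where your write-up is genuinely thin is the quasinilpotency of $w=(a+b)-(a+b)^2x$, which is the only hard step. In your ``fully direct alternative'' you analyse $(a+b)b^\pi a^\pi=aa^\pi+bb^\pi a^\pi$ carefully, but $w$ is \emph{not} equal to this element: since $1-E=b^\pi a^\pi-R$, one has $w=aa^\pi+bb^\pi a^\pi-(a+b)R$, and the correction $(a+b)R=\sum_{j\ge 2}b^jb^\pi(a^d)^{j-1}+bb^daa^\pi+\sum_{j\ge 1}(b^d)^ja^{j+1}a^\pi$ is not obviously negligible --- quasinilpotency is not preserved under addition, so ``controlled by the same index-shift'' needs to become an actual estimate of $\Vert w^n\Vert^{1/n}$ (using $w^n=(a+b)^n(1-E)$, the collapse of $(a+b)^n$ to $\sum_{j+k=n}b^j\cdot(\cdots)$, and a convolution bound together with a separate bound for the $(a^d)$- and $(b^d)$-tails). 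The Pierce-decomposition route you sketch first does avoid this, but only by pushing the same difficulty into the ``one quasinilpotent summand'' case, which is likewise not a mere ``direct power-series expansion'': there too the residual is $vv^\pi+$ a series, and one either iterates the Pierce decomposition along $q=vv^d$ (getting a triangular matrix with quasinilpotent diagonal corner, since $(1-q)uvv^\pi=0$) or carries out the convolution estimate. So the architecture is right and matches the cited source, but the single step on which the lemma actually turns is asserted rather than proved.
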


\begin{thm} Let $a,b,b^{\pi}a\in \mathcal{A}^d$ and $b^{\pi}ab^d=0$. Then the following are equivalent:\end{thm}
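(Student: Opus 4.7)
The plan is to exploit the hypothesis $b^{\pi}ab^d=0$ via a Peirce decomposition with respect to the idempotent $e:=bb^d$ and its complement $p:=1-e=b^{\pi}$. Since $b$ commutes with $e$, one gets $b=b_1+b_2$ with $b_1:=ebe$ invertible in the corner $e\mathcal{A}e$ (its inverse being $b^d$) and $b_2:=pbp\in\mathcal{A}^{qnil}$, because $b-b^2b^d\in\mathcal{A}^{qnil}$. Since $b^d\in e\mathcal{A}e$, the identity $b^{\pi}ab^d=0$ is equivalent to $pae=0$, so $a$ is block upper triangular with respect to $1=e+p$; its $(2,2)$-corner is $pap=b^{\pi}a$, which is g-Drazin invertible by hypothesis.

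Regarding $M=\left(\begin{smallmatrix} a & b\\ 1 & 0\end{smallmatrix}\right)$ as a matrix whose entries split according to $1=e+p$, I would permute rows and columns (grouping the $e$-coordinates first) to bring $M$ into the block upper triangular form
$$
M'=\begin{pmatrix} M_1 & X\\ 0 & M_2\end{pmatrix},\quad M_1=\begin{pmatrix} eae & b_1\\ e & 0\end{pmatrix},\quad M_2=\begin{pmatrix} b^{\pi}a & b_2\\ p & 0\end{pmatrix}.
$$
Since $b_1$ is invertible in $e\mathcal{A}e$, the corner $M_1\in M_2(e\mathcal{A}e)$ is invertible with anti-triangular inverse. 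Using the standard triangular-reduction principle for g-Drazin inverses, it follows that $M$ has g-Drazin inverse if and only if the reduced matrix $M_2\in M_2(p\mathcal{A}p)$ does; in $M_2$, the element $b_2$ is quasinilpotent while $b^{\pi}a$ remains g-Drazin invertible.

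The equivalent conditions in the theorem should be extracted by analyzing $M_2$. My strategy is to split $M_2$ as a sum of two matrices whose product vanishes, isolating the g-Drazin invertible summand built from $b^{\pi}a$ from the quasinilpotent part built from $b_2$, and apply Lemma~1.1 to assemble a g-Drazin inverse of $M_2$ (hence of $M$ after transferring back along the Peirce similarity). Each natural way of arranging this splitting should yield one of the equivalent formulations and a closed-form expression for $M^d$. I expect the main obstacle to be two-fold: first, discharging the top-right coupling $X$ so that g-Drazin invertibility of $M$ and of $M_2$ are genuinely equivalent; second, translating the formula for $M_2^d$ from the corner $M_2(p\mathcal{A}p)$ back into an expression in $M_2(\mathcal{A})$ stated only in terms of $a$, $b$, and $b^{\pi}a$. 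Both steps are conceptually clear but technically delicate, relying on repeated application of Lemma~1.1 together with careful bookkeeping via the commuting idempotents $e$ and $p$.
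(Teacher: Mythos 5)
Your setup is sound and in fact runs parallel to the paper's own argument: the paper likewise exploits the triangularity of the anti-triangular matrix forced by $b^{\pi}ab^d=0$, using the idempotents $\left(\begin{smallmatrix}b^{\pi}&0\\0&b^{\pi}\end{smallmatrix}\right)$ and $\left(\begin{smallmatrix}bb^d&0\\0&1\end{smallmatrix}\right)$, an explicitly group-invertible corner, Cline's formula, and the additive Lemma~2.1. Your symmetric choice $E=\mathrm{diag}(bb^d,bb^d)$ is if anything cleaner, since the corner $M_1$ is then genuinely invertible in $M_2(e\mathcal{A}e)$ (with inverse $\left(\begin{smallmatrix}0&e\\b^d&-b^d(eae)\end{smallmatrix}\right)$), so the reduction to $M_2$ follows from the triangular principle with one invertible diagonal block. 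A minor point: the theorem concerns $\left(\begin{smallmatrix}a&1\\b&0\end{smallmatrix}\right)$, not $\left(\begin{smallmatrix}a&b\\1&0\end{smallmatrix}\right)$; these are interchangeable via Cline's formula applied to $\left(\begin{smallmatrix}1&0\\0&b\end{smallmatrix}\right)$ and $\left(\begin{smallmatrix}a&1\\1&0\end{smallmatrix}\right)$, but you should say so.

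The genuine gap is that the argument stops exactly where the stated theorem begins. The theorem asserts the equivalence of three concrete conditions, and your proposal only reduces (1) to the g-Drazin invertibility of $M_2=\left(\begin{smallmatrix}b^{\pi}ab^{\pi}&bb^{\pi}\\b^{\pi}&0\end{smallmatrix}\right)$ regarded inside the corner algebra $M_2(b^{\pi}\mathcal{A}b^{\pi})$. Identifying this with condition (2) is not routine bookkeeping: the matrix $N=\left(\begin{smallmatrix}b^{\pi}a&1\\b^{\pi}b&0\end{smallmatrix}\right)$ of (2) carries the full identity $1$ in its $(1,2)$ entry and does not lie in that corner algebra, so one must bridge the two by the chain of Cline's-formula factorizations through $\mathrm{diag}(1,b^{\pi})$ and $\mathrm{diag}(b^{\pi},b^{\pi})$ that constitutes the bulk of the paper's proof. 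Condition (3), which replaces $b^{\pi}a,\ b^{\pi}b$ by $ab^{\pi},\ bb^{\pi}$, is not addressed at all; its equivalence with (2) again rests on such factorizations. Your closing paragraph explicitly defers both of these steps (``conceptually clear but technically delicate''), so as written the proposal is a correct plan for ``(1) holds iff $M_2$ is g-Drazin invertible,'' not a proof of the equivalence (1)$\Leftrightarrow$(2)$\Leftrightarrow$(3).
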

\begin{enumerate}
\item [(1)] $\left(
  \begin{array}{cc}
    a&1\\
    b&0
  \end{array}
\right)$ has g-Drazin inverse.
\item [(2)] $\left(
  \begin{array}{cc}
    b^{\pi}a&1\\
    b^{\pi}b&0
  \end{array}
\right)$ has g-Drazin inverse.
\item [(3)] $\left(
  \begin{array}{cc}
    ab^{\pi}&1\\
    bb^{\pi}&0
  \end{array}
\right)$ has g-Drazin inverse.
\end{enumerate}
\begin{proof} $(1)\Rightarrow (2)$ Let $M=\left(
  \begin{array}{cc}
    a&1\\
   b&0
  \end{array}
\right)$ and $p=\left(
\begin{array}{cc}
b^{\pi}&0\\
0&b^{\pi}
\end{array}
\right).$ Since $b^{\pi}ab^d=0$, we have $$pM(1-p)=0, M(1-p)=\left(
\begin{array}{cc}
bb^dabb^d&bb^d\\
b^2b^d&0
\end{array}
\right).$$ We directly verify that $[M(1-p)]^{\#}=\left(
\begin{array}{cc}
0&-bb^d\\
-b^2b^d&bb^dabb^d
\end{array}
\right)$. Since $M$ has g-Drazin inverse, it follows by~\cite[Lemma 2.2]{ZM} that $pM=\left(
\begin{array}{cc}
b^{\pi}a&b^{\pi}\\
b^{\pi}b&0
\end{array}
\right)$ has g-Drazin inverse.

Let $N=\left(
  \begin{array}{cc}
    b^{\pi}a&1\\
    b^{\pi}b&0
  \end{array}
\right)$. It is easy to verify that
$$\begin{array}{rll}
N&=&\left(
\begin{array}{cc}
1&0\\
0&b^{\pi}
\end{array}
\right)\left(
\begin{array}{cc}
b^{\pi}a&1\\
b^{\pi}b&0
\end{array}
\right),\\
\left(
\begin{array}{cc}
b^{\pi}a&b^{\pi}\\
b^{\pi}b&0
\end{array}
\right)&=&\left(
\begin{array}{cc}
b^{\pi}a&1\\
b^{\pi}b&0
\end{array}
\right)\left(
\begin{array}{cc}
1&0\\
0&b^{\pi}
\end{array}
\right).\\
\end{array}$$
By using Cline's formula (see~\cite[Theorem 2.1]{LC}), $N$ has g-Drazin inverse.

$(2)\Rightarrow (1) $ Let $e=\left(
\begin{array}{cc}
bb^d&0\\
0&1
\end{array}
\right)$. Then $M=\left(
\begin{array}{cc}
\alpha&\beta\\
\gamma&\delta
\end{array}
\right)_e.$ Here, $$\begin{array}{c}
\alpha=\left(
\begin{array}{cc}
bb^dabb^d&bb^d\\
b^2b^d&0
\end{array}
\right),\beta=\left(
\begin{array}{cc}
bb^dab^{\pi}&0\\
bb^{\pi}&0
\end{array}
\right),\\
\gamma=\left(
\begin{array}{cc}
0&b^{\pi}\\
0&0
\end{array}
\right),\delta=\left(
\begin{array}{cc}
b^{\pi}a&0\\
0&0
\end{array}
\right).
\end{array}$$ Then
$$\begin{array}{c}
\alpha^{\#}=\left(
\begin{array}{cc}
0&-bb^d\\
-b^2b^d&bb^dabb^d
\end{array}
\right), \alpha^{\pi}=\left(
\begin{array}{cc}
b^{\pi}&0\\
0&b^{\pi}
\end{array}
\right);\\
\beta+\gamma+\delta=\left(
\begin{array}{cc}
bb^dab^{\pi}+b^{\pi}a&b^{\pi}\\
bb^{\pi}&0
\end{array}
\right).
\end{array}$$ Hence we have $$(\beta+\gamma+\delta)\alpha=\left(
\begin{array}{cc}
bb^dab^{\pi}+b^{\pi}a&b^{\pi}\\
bb^{\pi}&0
\end{array}
\right)\left(
\begin{array}{cc}
bb^dabb^d&bb^d\\
b^2b^d&0
\end{array}
\right)=0.$$ One easily checks that
$$\begin{array}{rll}
\beta+\gamma+\delta&=&\left(
\begin{array}{cc}
bb^da+b^{\pi}a&1\\
b&0
\end{array}
\right)\left(
\begin{array}{cc}
b^{\pi}&0\\
0&b^{\pi}
\end{array}
\right);\\
\left(
\begin{array}{cc}
b^{\pi}a&b^{\pi}\\
b^{\pi}b&0
\end{array}
\right)&=&\left(
\begin{array}{cc}
b^{\pi}&0\\
0&b^{\pi}
\end{array}
\right)\left(
\begin{array}{cc}
bb^da+b^{\pi}a&1\\
b&0
\end{array}
\right);\\
\left(
\begin{array}{cc}
b^{\pi}a&b^{\pi}\\
b^{\pi}b&0
\end{array}
\right)&=&\left(
\begin{array}{cc}
b^{\pi}a&1\\
b^{\pi}b&0
\end{array}
\right)\left(
\begin{array}{cc}
1&0\\
0&b^{\pi}
\end{array}
\right),\\
\left(
\begin{array}{cc}
b^{\pi}a&1\\
b^{\pi}b&0
\end{array}
\right)&=&\left(
\begin{array}{cc}
1&0\\
0&b^{\pi}
\end{array}
\right)\left(
\begin{array}{cc}
b^{\pi}a&1\\
b^{\pi}b&0
\end{array}
\right).\\
\end{array}$$
Since $N$ has g-Drazin inverse, by using Cline's formula, $\beta+\gamma+\delta$ has g-Drazin inverse.
Set $N=\left(
  \begin{array}{cc}
    b^{\pi}a&1\\
    b^{\pi}b&0
  \end{array}
\right)$. Then we have
$$\begin{array}{lll}
\left(
\begin{array}{cc}
b^{\pi}a&b^{\pi}\\
b^{\pi}b&0
\end{array}
\right)^d&=&\left(
\begin{array}{cc}
b^{\pi}a&1\\
b^{\pi}b&0
\end{array}
\right)(N^d)^2\left(
\begin{array}{cc}
1&0\\
0&b^{\pi}
\end{array}
\right)\\
&=&N^d\left(
\begin{array}{cc}
1&0\\
0&b^{\pi}
\end{array}
\right).
\end{array}$$
Therefore $$\begin{array}{ll}
&(\beta+\gamma+\delta)^d\\
=&\left(
\begin{array}{cc}
bb^da+b^{\pi}a&1\\
b&0
\end{array}
\right)\big[\left(
\begin{array}{cc}
b^{\pi}a&b^{\pi}\\
b^{\pi}b&0
\end{array}
\right)^d\big]^2\left(
\begin{array}{cc}
b^{\pi}&0\\
0&b^{\pi}
\end{array}
\right)\\
=&\left(
\begin{array}{cc}
bb^da+b^{\pi}a&1\\
b&0
\end{array}
\right)N^d\left(
\begin{array}{cc}
1&0\\
0&b^{\pi}
\end{array}
\right)N^d\left(
\begin{array}{cc}
b^{\pi}&0\\
0&b^{\pi}
\end{array}
\right).\\
\end{array}$$
In light of Lemma 2.1, $M$ has g-Drazin inverse. In fact, we get
$$\begin{array}{rll}
M^d&=&\sum\limits_{i=0}^{\infty}\alpha^i\alpha^{\pi}[(\beta+\gamma+\delta)^d]^{i+1}\\
&+&\sum\limits_{i=0}^{\infty}[\alpha^{\#}]^{i+1}(\beta+\gamma+\delta)^i(\beta+\gamma+\delta)^{\pi}\\
&=&\alpha^{\pi}(\beta+\gamma+\delta)^d\\
&+&\sum\limits_{i=0}^{\infty}[\alpha^{\#}]^{i+1}(\beta+\gamma+\delta)^i(\beta+\gamma+\delta)^{\pi}.\\
\end{array}$$

$(2)\Leftrightarrow (3)$ One directly checks that
$$\begin{array}{rll}
\left(
  \begin{array}{cc}
    b^{\pi}a&1\\
    b^{\pi}b&0
  \end{array}
\right)&=&\left(
  \begin{array}{cc}
    1&0\\
    0&b^{\pi}
  \end{array}
\right)\left(
  \begin{array}{cc}
    b^{\pi}a&1\\
    b^{\pi}b&0
  \end{array}
\right),\\
\left(
  \begin{array}{cc}
    b^{\pi}a&b^{\pi}\\
    b^{\pi}b&0
  \end{array}
\right)&=&\left(
  \begin{array}{cc}
    b^{\pi}a&1\\
    b^{\pi}b&0
  \end{array}
\right)\left(
  \begin{array}{cc}
    1&0\\
    0&b^{\pi}
  \end{array}
\right);\\
\left(
  \begin{array}{cc}
    b^{\pi}a&b^{\pi}\\
    b^{\pi}b&0
  \end{array}
\right)&=&\left(
  \begin{array}{cc}
    b^{\pi}&0\\
    0&b^{\pi}
  \end{array}
\right)\left(
  \begin{array}{cc}
    a&1\\
    b&0
  \end{array}
\right),\\
\left(
  \begin{array}{cc}
    ab^{\pi}&b^{\pi}\\
    bb^{\pi}&0
  \end{array}
\right)&=&\left(
  \begin{array}{cc}
    a&1\\
    b&0
  \end{array}
\right)\left(
  \begin{array}{cc}
    b^{\pi}&0\\
    0&b^{\pi}
  \end{array}
\right);\\
\left(
  \begin{array}{cc}
     ab^{\pi}&b^{\pi}\\
    bb^{\pi}&0
  \end{array}
\right)&=&\left(
  \begin{array}{cc}
    ab^{\pi}&1\\
    bb^{\pi}&0
  \end{array}
\right)\left(
  \begin{array}{cc}
   1&0\\
   0&b^{\pi}
  \end{array}
\right),\\
\left(
  \begin{array}{cc}
    ab^{\pi}&1\\
    bb^{\pi}&0
  \end{array}
\right)&=&\left(
  \begin{array}{cc}
   1&0\\
   0&b^{\pi}
  \end{array}
\right)\left(
  \begin{array}{cc}
    ab^{\pi}&1\\
    bb^{\pi}&0
  \end{array}
\right).
\end{array}$$
Therefore we complete the proof by repeatedly using Cline's formula (see~\cite[Theorem 2.1]{LC}).\end{proof}

\begin{cor} Let $a,b, b^{\pi}a\in \mathcal{A}^d$. If $b^{\pi}ab^d=0, abb^{\pi}=b^{\pi}ba,$ then $\left(
  \begin{array}{cc}
    a&1\\
    b&0
  \end{array}
\right)$ has g-Drazin inverse.\end{cor}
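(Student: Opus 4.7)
The plan is to apply Theorem 2.2 by verifying its condition (2), namely that $N := \left(\begin{array}{cc} b^{\pi}a & 1 \\ b^{\pi}b & 0 \end{array}\right)$ has a g-Drazin inverse, so that the theorem delivers condition (1), which is the desired conclusion. Setting $c := b^{\pi}a$ and $d := b^{\pi}b$, the hypotheses give $c \in \mathcal{A}^d$, and $d = bb^{\pi} \in \mathcal{A}^{qnil}$ is a standard property of the g-Drazin inverse.

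The first task will be to exploit the hypothesis $abb^{\pi} = b^{\pi}ba$ to deduce $cd = dc$. Using only $bb^{\pi} = b^{\pi}b$ and $(b^{\pi})^2 = b^{\pi}$, one computes
\[
cd = b^{\pi}ab^{\pi}b = b^{\pi}abb^{\pi} = b^{\pi}(b^{\pi}ba) = b^{\pi}ba, \qquad dc = b^{\pi}bb^{\pi}a = b^{\pi}ba.
\]

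The problem therefore reduces to the commuting case: if $c \in \mathcal{A}^d$, $d \in \mathcal{A}^{qnil}$, and $cd = dc$, then $\left(\begin{array}{cc} c & 1 \\ d & 0 \end{array}\right) \in M_2(\mathcal{A})^d$. I would handle this by a Pierce-type decomposition along the idempotent $E := \left(\begin{array}{cc} cc^d & 0 \\ 0 & cc^d \end{array}\right)$, which commutes with $N$ because $c^d$ commutes with $d$ (a standard consequence of $cd = dc$ and $c \in \mathcal{A}^d$). Writing $N = N_1 + N_2$ with $N_1 := ENE$ and $N_2 := (1-E)N(1-E)$, one has $N_1 N_2 = N_2 N_1 = 0$, and Lemma 2.1 reduces the task to showing that each $N_i$ is g-Drazin invertible. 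On the $E$-corner, $c$ is invertible (inverse $c^d$), and a similarity $N_1 \mapsto S N_1 S^{-1}$ with $S := \left(\begin{array}{cc} 1 & 0 \\ s & 1 \end{array}\right)$ brings $N_1$ into block upper triangular form provided $s(s-c) = d$; the required $s$ is $c(1+\sqrt{1+4c^{-2}d})/2$, where the square root is defined by the binomial series $\sum_{k\ge 0}\binom{1/2}{k}(4c^{-2}d)^k$ that converges in the $E$-corner because $c^{-2}d$ is quasinilpotent there. One checks that $s$ is invertible while $c-s$ is quasinilpotent, so $S N_1 S^{-1}$ has g-Drazin invertible diagonal blocks, giving $N_1 \in M_2(\mathcal{A})^d$. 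On the $(1-E)$-corner, $cc^{\pi}$ and $dc^{\pi}$ are commuting quasinilpotents; the Gelfand transform of the commutative Banach subalgebra they generate sends $N_2$ to $\left(\begin{array}{cc} 0 & 1 \\ 0 & 0 \end{array}\right)$ at every character, so $\rho(N_2) = 0$ and $N_2 \in M_2(\mathcal{A})^{qnil}$.

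The main technical obstacle will be the treatment of $N_1$ on the $E$-corner: one must invoke the holomorphic functional calculus (via the convergent binomial series) inside the commutative subalgebra generated by $c$, $c^d$, and $d$ in order to produce the spectral square root that diagonalizes the commuting anti-triangular block; the remaining verifications are routine.
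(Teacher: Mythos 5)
Your proposal is correct, and at the top level it follows the same strategy as the paper: reduce Corollary 2.3 to Theorem 2.2 by checking that the two entries of one of the equivalent anti-triangular matrices commute. The paper works with condition (3), computing $(ab^{\pi})(bb^{\pi})=abb^{\pi}=b^{\pi}ba=b^{\pi}bab^{\pi}=(bb^{\pi})(ab^{\pi})$, and then simply asserts ``hence $N$ has g-Drazin inverse,'' leaning on the commuting anti-triangular case known from the literature cited in the introduction. You work with condition (2), obtaining $cd=dc$ for $c=b^{\pi}a$, $d=b^{\pi}b$ (your computation is right, and it uses only $bb^{\pi}=b^{\pi}b$ and the hypothesis $abb^{\pi}=b^{\pi}ba$), and then---this is where you genuinely diverge---you prove the commuting case from scratch, exploiting that $d=bb^{\pi}$ is quasinilpotent: a Pierce-type splitting along $E=\left(\begin{array}{cc} cc^d&0\\0&cc^d\end{array}\right)$ combined with Lemma 2.1, a completion-of-the-square similarity on the $E$-corner built from the binomial-series square root of $1+4c^{-2}d$, and a Gelfand-transform argument showing the $(1-E)$-corner is quasinilpotent. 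These steps all check out: $c^d$ commutes with $d$ because it lies in the double commutant of $c$, so $E$ commutes with $N$; the equation $s(s-c)=d$ does annihilate the lower-left entry of $SN_1S^{-1}$, leaving diagonal blocks $c-s$ (quasinilpotent) and $s$ (invertible); and the spectrum of a $2\times 2$ matrix over a commutative Banach algebra is the union of the eigenvalue sets of its images under characters, which forces $\rho(N_2)=0$. What your longer route buys is self-containedness: the paper never actually proves the commuting anti-triangular case for g-Drazin inverses in a general Banach algebra, whereas your argument supplies exactly the special instance (lower-left entry quasinilpotent) that is needed here. What it costs is length and machinery (holomorphic functional calculus and Gelfand theory) for a step the paper treats as known.
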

\begin{proof} Let $N=\left(
  \begin{array}{cc}
    ab^{\pi}&1\\
    bb^{\pi}&0
  \end{array}
\right)$. We check that $$(ab^{\pi})(bb^{\pi})=abb^{\pi}=b^{\pi}ba=b^{\pi}bab^{\pi}=(bb^{\pi})(ab^{\pi}).$$
Hence $N$ has g-Drazin inverse. This completes the proof by Theorem 2.2.\end{proof}

Yu et al. characterized the Drazin invertibility of an anti-triangular matrix over a complex Hibert space by using solutions of certain operator equations (see~\cite[Theorem 4.1]{Y}). We now generalize their main results to the g-Darzin inverse in a Banach algebra by using ring technique as follows.

\begin{cor} Let $a,b, b^{\pi}a\in \mathcal{A}^d$. If $b^{\pi}ab^d=0, b^{\pi}ab=b^{\pi}ba,$ then $\left(
  \begin{array}{cc}
    a&1\\
    b&0
  \end{array}
\right)$ has g-Drazin inverse.\end{cor}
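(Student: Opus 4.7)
The plan is to reduce Corollary 2.4 to Corollary 2.3 by applying the latter not to $(a,b)$ but to the modified pair $(b^{\pi}a, bb^{\pi})$. As a preliminary reduction I would note that the hypothesis $b^{\pi}ab^d=0$, after right-multiplication by $b$ and using $b^db=bb^d$, gives $b^{\pi}a\cdot bb^d=0$; hence $b^{\pi}a=b^{\pi}a(bb^d+b^{\pi})=b^{\pi}ab^{\pi}$. This identity will be used twice.

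The central step is to verify that $b^{\pi}a$ and $bb^{\pi}$ commute. One direction is immediate: $(bb^{\pi})(b^{\pi}a)=b(b^{\pi})^2 a=bb^{\pi}a$. For the other, I would write $(b^{\pi}a)(bb^{\pi})=b^{\pi}(ab)b^{\pi}$, invoke the hypothesis $b^{\pi}ab=b^{\pi}ba$ to replace this by $b^{\pi}(ba)b^{\pi}=(b^{\pi}b)(ab^{\pi})=bb^{\pi}\cdot ab^{\pi}$, and then apply the preliminary identity in the form $b^{\pi}ab^{\pi}=b^{\pi}a$ to obtain $bb^{\pi}\cdot ab^{\pi}=b\cdot b^{\pi}ab^{\pi}=b\cdot b^{\pi}a=bb^{\pi}a$, matching the other product.

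With commutativity in hand I would apply Corollary 2.3 to the pair $(b^{\pi}a, bb^{\pi})$ in place of $(a,b)$. Since $bb^{\pi}\in\mathcal{A}^{qnil}$, we have $(bb^{\pi})^d=0$ and $(bb^{\pi})^{\pi}=1$, so the hypotheses of Corollary 2.3 collapse to: $b^{\pi}a\in\mathcal{A}^d$ (given), $(bb^{\pi})^{\pi}(b^{\pi}a)=b^{\pi}a\in\mathcal{A}^d$, the vacuous identity $(bb^{\pi})^{\pi}(b^{\pi}a)(bb^{\pi})^d=0$, and the commutativity just established. Corollary 2.3 then yields that $\left(\begin{array}{cc} b^{\pi}a & 1 \\ bb^{\pi} & 0 \end{array}\right)$ has g-Drazin inverse. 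Because $b^{\pi}b=bb^{\pi}$ and $b^{\pi}ab^d=0$ is in force, this is precisely condition $(2)$ of Theorem 2.2, so the equivalence $(1)\Leftrightarrow(2)$ delivers condition $(1)$, namely that $\left(\begin{array}{cc} a & 1 \\ b & 0 \end{array}\right)$ has g-Drazin inverse.

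The main obstacle I expect is the commutativity computation: it is short but requires threading both hypotheses through at the correct moments while silently using $b^{\pi}b=bb^{\pi}$ and $(b^{\pi})^2=b^{\pi}$ to absorb factors. Once commutativity is secured, the remainder is bookkeeping to match the reduced pair against Corollary 2.3 and then to quote Theorem 2.2.
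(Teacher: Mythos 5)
Your proposal is correct and follows essentially the same route as the paper: both reduce to showing that the matrix in condition (2) of Theorem 2.2 has a g-Drazin inverse by verifying that its entries commute, the paper via the one-line identity $(b^{\pi}a)(b^{\pi}b)=b^{\pi}ab=b^{\pi}ba=(b^{\pi}b)(b^{\pi}a)$ and you via the equivalent computation using $b^{\pi}ab^{\pi}=b^{\pi}a$. Your only deviation is packaging the final step as an application of Corollary 2.3 to the degenerate pair $(b^{\pi}a,\,bb^{\pi})$ with $bb^{\pi}$ quasinilpotent, which unwinds to the same commuting-entries fact the paper invokes before finishing with Theorem 2.2.
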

\begin{proof} Let $N=\left(
  \begin{array}{cc}
    b^{\pi}a&1\\
    b^{\pi}b&0
  \end{array}
\right)$. By hypothesis, we have $$(b^{\pi}a)(b^{\pi}b)=b^{\pi}ab=b^{\pi}ba=(b^{\pi}b)(b^{\pi}a).$$
As in the proof of Theorem 2.2, $N$ has g-Drazin inverse. This completes the proof by
Theorem 2.2.\end{proof}

\begin{cor} Let $a,b, b^{\pi}a\in \mathcal{A}^d$. If $b^{\pi}ab^d=0, (b^{\pi}a)^db^{\pi}ab=0, b^{\pi}ab(b^{\pi}a)^{\pi}=0,$ then $\left(
  \begin{array}{cc}
    a&1\\
    b&0
  \end{array}
\right)$ has g-Drazin inverse.\end{cor}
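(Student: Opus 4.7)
The plan is to apply Theorem 2.2, which under the standing hypothesis $b^{\pi}ab^d = 0$ reduces the g-Drazin invertibility of $M = \left(\begin{array}{cc} a & 1 \\ b & 0 \end{array}\right)$ to that of $N = \left(\begin{array}{cc} b^{\pi}a & 1 \\ b^{\pi}b & 0 \end{array}\right)$. Setting $c = b^{\pi}a$ and $d = b^{\pi}b$, the relation $b^{\pi}ab^d = 0$ forces $c = cb^{\pi}$ and $cd = b^{\pi}ab$, so the remaining two hypotheses translate to $cc^d d = 0$ (equivalently $c^d d = 0$) and $(cd)c^{\pi} = 0$; note also $d \in \mathcal{A}^{\mathrm{qnil}}$.

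Introduce the idempotent $E = \mathrm{diag}(cc^d, cc^d) \in M_2(\mathcal{A})$. The identity $cc^d d = 0$ yields $EN(1-E) = 0$, making $N$ block lower-triangular with respect to $E$. Writing $N = R + S$ with $R = NE$ and $S = (1-E)N(1-E)$ gives $RS = NE(1-E)N(1-E) = 0$. For $R$, Cline's formula \cite[Theorem 2.1]{LC} shows $R \in M_2(\mathcal{A})^d$ iff $EN = ENE \in M_2(\mathcal{A})^d$, and $ENE = \left(\begin{array}{cc} c^2c^d & cc^d \\ 0 & 0 \end{array}\right)$ lies in the corner $EM_2(\mathcal{A})E \cong M_2(cc^d\mathcal{A}cc^d)$ where $c^2c^d$ is invertible with inverse $c^d$; hence this block admits the explicit group inverse $\left(\begin{array}{cc} c^d & (c^d)^2 \\ 0 & 0 \end{array}\right)$.

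The main obstacle is to show that $S = \left(\begin{array}{cc} cc^{\pi} & c^{\pi} \\ dc^{\pi} & 0 \end{array}\right)$, considered inside $(1-E)M_2(\mathcal{A})(1-E) \cong M_2(c^{\pi}\mathcal{A}c^{\pi})$, is g-Drazin invertible. Set $a' = cc^{\pi}$ and $b' = dc^{\pi}$. Since $c \in \mathcal{A}^d$, $a' \in \mathcal{A}^{\mathrm{qnil}}$; and $cc^d d = 0$ gives $c^{\pi}d = d$, so $(b')^n = d^n c^{\pi}$ and $b' \in \mathcal{A}^{\mathrm{qnil}}$ too. The second hypothesis becomes $a'b' = c(c^{\pi}d)c^{\pi} = cdc^{\pi} = 0$. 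I then prove $S$ is quasinilpotent in the corner: for $\lambda \ne 0$, a Schur-complement computation (the $(2,2)$-block $-\lambda c^{\pi}$ has inverse $-\lambda^{-1}c^{\pi}$ in the corner) reduces the invertibility of $S - \lambda I_{\mathrm{corner}}$ to that of $I_{\mathrm{corner}} - \lambda^{-1}a' - \lambda^{-2}b'$. Since $a', b'$ are quasinilpotent with $(\lambda^{-1}a')(\lambda^{-2}b') = 0$, Lemma 2.1 applied in $c^{\pi}\mathcal{A}c^{\pi}$ yields $\lambda^{-1}a' + \lambda^{-2}b' \in (c^{\pi}\mathcal{A}c^{\pi})^d$ with g-Drazin inverse $0$, hence this element is quasinilpotent and $I_{\mathrm{corner}} - (\lambda^{-1}a' + \lambda^{-2}b')$ is invertible. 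Thus $\sigma(S) \subseteq \{0\}$, $S$ is quasinilpotent, and $S \in M_2(\mathcal{A})^d$ with $S^d = 0$.

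Finally, Lemma 2.1 applied to $N = R + S$ (with $R, S \in M_2(\mathcal{A})^d$ and $RS = 0$) gives $N \in M_2(\mathcal{A})^d$, and Theorem 2.2 then delivers $M \in M_2(\mathcal{A})^d$, completing the proof.
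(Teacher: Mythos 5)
Your proof is correct, but it takes a genuinely different route from the paper's. The paper does not decompose $N$ itself; it passes to $N^2$, writes $N^2=P+Q$ with $P=\left(\begin{smallmatrix}(b^{\pi}a)^2 & b^{\pi}a\\ 0&0\end{smallmatrix}\right)$ and $Q=\left(\begin{smallmatrix}b^{\pi}b&0\\ (b^{\pi}b)(b^{\pi}a)& b^{\pi}b\end{smallmatrix}\right)$, computes $P^d$ and $P^{\pi}$ explicitly, checks $P^dQ=0$ and $PQP^{\pi}=0$ from the hypotheses, and then combines Cline's formula, Lemma 2.1 and a Pierce decomposition along $p=PP^d$ to conclude $N^2\in M_2(\mathcal{A})^d$, hence $N\in M_2(\mathcal{A})^d$, and finally invokes Theorem 2.2. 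You avoid squaring altogether: you split $N=NE+(1-E)N(1-E)$ along $E=\mathrm{diag}(cc^d,cc^d)$, handle the $E$-corner by an explicit group inverse, and prove the complementary corner $S$ is quasinilpotent by a Schur-complement resolvent computation, finishing with Lemma 2.1. Your translations of the hypotheses ($c=cb^{\pi}$, $cd=b^{\pi}ab$, $cc^dd=0$, $cdc^{\pi}=0$), the identity $EN(1-E)=0$, and the induction $(dc^{\pi})^n=d^nc^{\pi}$ are all sound. What your route buys is a cleaner spectral picture: you exhibit $S^d=0$ explicitly, so Lemma 2.1 would even give a closed formula $N^d=\sum_{i\ge 0}S^i(R^{d})^{i+1}$. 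What the paper's route buys is that it stays entirely inside its own toolkit (Lemma 2.1, Cline's formula, the stock $2\times 2$ formulas for $P^d$ and $P^{\pi}$) without needing the Schur-complement invertibility argument or any direct spectrum computation. One cosmetic slip: the identity $a'b'=cdc^{\pi}=0$ follows from the third hypothesis $b^{\pi}ab(b^{\pi}a)^{\pi}=0$, not the second as you label it, but this does not affect the argument.
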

\begin{proof} Let $N=\left(
  \begin{array}{cc}
    b^{\pi}a&1\\
    b^{\pi}b&0
  \end{array}
\right)$. Then $N^2=\left(
  \begin{array}{cc}
    (b^{\pi}a)^2+b^{\pi}b&b^{\pi}a\\
    (b^{\pi}b)(b^{\pi}a)&b^{\pi}b
  \end{array}
\right)$. Write $N^2=P+Q$, where $$P=\left(
  \begin{array}{cc}
    (b^{\pi}a)^2&b^{\pi}a\\
    0&0
  \end{array}
\right),Q=\left(
  \begin{array}{cc}
    b^{\pi}b&0\\
    (b^{\pi}b)(b^{\pi}a)&b^{\pi}b
  \end{array}
\right).$$ By hypothesis, we have $$P^d=\left(
  \begin{array}{cc}
    [(b^{\pi}a)^d]^2&[(b^{\pi}a)^d]^3\\
    0&0
  \end{array}
\right),P^{\pi}=\left(
  \begin{array}{cc}
    (b^{\pi}a)^{\pi}&-(b^{\pi}a)^d\\
    0&I
  \end{array}
\right).$$ Clearly, $(b^{\pi}a)^db^{\pi}b=(b^{\pi}a)^d[(b^{\pi}a)^db^{\pi}ab]$ and $-(b^{\pi}b)((b^{\pi}a))(b^{\pi}a)^d+b^{\pi}b=bb^{\pi}(b^{\pi}a)^{\pi}$.
 Hence $P^dQ=0, PQP^{\pi}=0.$ Since $P^{\pi}Q=Q$ has g-Drazin inverse, so has $QP^{\pi}$ by Cline's formula.
In view of Lemma 2.1, $PP^{\pi}+QP^{\pi}$ has g-Drazin inverse. Choose $p=PP^d$. Then $$P=\left(
  \begin{array}{cc}
    P^2P^d&0\\
    0&PP^{\pi}
  \end{array}
\right)_p, Q=\left(
  \begin{array}{cc}
    0&0\\
    0&QP^{\pi}
  \end{array}
\right)_p.$$ Hence, $N^2=\left(
  \begin{array}{cc}
    P^2P^d&0\\
    0&PP^{\pi}+QP^{\pi}
  \end{array}
\right)_p$ has g-Drazin inverse, and then $N$ has g-Drazin inverse.
According to Theorem 2.2, we complete the proof.\end{proof}

We are now ready to prove the following:

\begin{thm} Let $a,b,b^{\pi}a\in \mathcal{A}^d$. If $b^{\pi}(ab^2)=0$ and $b^{\pi}(aba)=0$, then $\left(
  \begin{array}{cc}
    a&1\\
    b&0
  \end{array}
\right)$ has g-Drazin inverse.\end{thm}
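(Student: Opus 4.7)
My plan is to apply Theorem~2.2 to reduce the problem, and then analyse the reduced matrix via a Corollary~2.5-style splitting of its square.

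First, since the axioms of the g-Drazin inverse give $b^d = b^2(b^d)^3$, the hypothesis $b^{\pi}(ab^2)=0$ implies $b^{\pi}ab^d=0$. Hence Theorem~2.2 applies, and it suffices to show that $N:=\left(\begin{array}{cc} b^{\pi}a & 1 \\ b^{\pi}b & 0\end{array}\right)$ is g-Drazin invertible. Write $c=b^{\pi}a$ and $d=bb^{\pi}\in \mathcal{A}^{qnil}$. Using the identity $b^{\pi}ab\cdot b^{\pi}=b^{\pi}ab$ (itself a consequence of $b^{\pi}ab^d=0$), I would first extract the auxiliary identities $cd^2=0$ (from $b^{\pi}ab^2=0$) and $cdc=0$ (from $b^{\pi}aba=0$), together with the crucial nilpotency relation that $u:=cd=b^{\pi}ab$ satisfies $u^2=uc=ud=0$. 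These relations will drive everything that follows.

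Next, following the Corollary~2.5 template, I would split $N^2=P+Q$ with $P=\left(\begin{array}{cc} c^2 & c \\ 0 & 0 \end{array}\right)$ and $Q=\left(\begin{array}{cc} d & 0 \\ dc & d \end{array}\right)$. As in Corollary~2.5, $P$ is group invertible with an explicit group inverse. The element $Q$ is quasinilpotent because the vanishing $cd^k=0$ for $k\geq 2$ collapses the off-diagonal entry of $Q^n$ to $d^n c + d^{n-1}u$, whose norm decays with the quasinilpotent $d$. The identities $cdc=0$ and $u^2=0$ furthermore give $c^ddc=0$ and, together with a short Peirce computation, produce $(P^dQ)^2=0$; both $PQ$ and $QP$ also turn out to square to zero.

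The main obstacle is the final step: concluding $P+Q\in\mathcal{A}^d$ from these weaker-than-Corollary-2.5 vanishing conditions. Corollary~2.5 required $P^dQ=0$ and $PQP^{\pi}=0$ outright, whereas we have only that their squares vanish. I would handle this by performing a second Peirce decomposition under the idempotent $p=PP^d$: the relation $qdq=0$ (inherited from $cdc=0$) together with the square-zero properties above makes the off-diagonal blocks of $Q$ in this decomposition nilpotent, the $p\mathcal{A}p$-diagonal block $pPp+pQp$ invertible in $p\mathcal{A}p$ (an invertible-plus-nilpotent), and the $(1-p)\mathcal{A}(1-p)$-diagonal block $PP^{\pi}+QP^{\pi}$ a sum of two quasinilpotents with product zero modulo the nilpotent $PQP^{\pi}$. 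Combining these via Lemma~2.1 and Cline's formula yields $N^2\in\mathcal{A}^d$. Finally, since a Banach-algebra element is g-Drazin invertible iff its square is, we conclude $N\in\mathcal{A}^d$, and Theorem~2.2 gives $M\in\mathcal{A}^d$.
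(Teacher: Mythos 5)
Your reduction is the paper's: you derive $b^{\pi}ab^d=0$ from $b^{\pi}ab^2=0$, invoke Theorem 2.2 to pass to $N$, and split $N^2=P+Q$ with exactly the paper's $P$ and $Q$; the identities $cd^2=0$ and $cdc=0$ (with $c=b^{\pi}a$, $d=b^{\pi}b$) are also correct. The genuine gap is your final step. First, a factual error: $P=\left(\begin{smallmatrix}c^2&c\\0&0\end{smallmatrix}\right)$ is \emph{not} group invertible in general (take $c$ nilpotent of index $3$; then $P$ is a nonzero nilpotent), only g-Drazin invertible --- Corollary 2.5 produces $P^d$, not $P^{\#}$. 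More seriously, the concluding Peirce argument is a sketch resting on unproved assertions: you never show that an off-diagonal corner of $P+Q$ relative to $p=PP^d$ vanishes, so the decomposition is not block-triangular and ``nilpotent off-diagonal blocks'' does not by itself give g-Drazin invertibility; ``invertible plus nilpotent'' in $p\mathcal{A}p$ need not be invertible without commutativity; and ``product zero modulo a nilpotent'' is not a criterion under which a sum of quasinilpotents is quasinilpotent. Each of these would need its own proof, and none is supplied.

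The irony is that you have weakened your own hypotheses unnecessarily: the identities you already established give the \emph{unsquared} vanishing $PQ^2=0$ and $PQP=0$, not merely $(PQ)^2=0$. Indeed $PQ=\left(\begin{smallmatrix}c^2d&cd\\0&0\end{smallmatrix}\right)$ (the $(1,1)$ entry $c^2d+cdc$ collapses using $cdc=0$), so every entry of $PQ^2$ is a multiple of $cd^2$ or $cd^2c$, and every entry of $PQP$ is a multiple of $cdc$; all vanish. With $PQ^2=0$ and $PQP=0$ in hand, the known additive result \cite[Theorem 2.4]{CM1} (applied with $a=P$, $b=Q$, noting $PQP=0$ forces $Q^{\pi}PQP=0$) gives $N^2=P+Q\in\mathcal{A}^d$ at once; then $N\in\mathcal{A}^d$ by \cite[Corollary 2.2]{M} and Theorem 2.2 finishes. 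This is precisely the paper's route, and you should replace your Peirce analysis with it.
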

\begin{proof} Since $b^{\pi}(ab^2)=0$, we have $$b^{\pi}ab^d=b^{\pi}ab^2(b^d)^3=0,b^{\pi}a(b^{\pi}b)^2=0, b^{\pi}a(b^{\pi}b)b^{\pi}a=0.$$
Let $N=\left(
  \begin{array}{cc}
    b^{\pi}a&1\\
    b^{\pi}b&0
  \end{array}
\right).$ Then $N^2=\left(
  \begin{array}{cc}
    (b^{\pi}a)^2+b^{\pi}b&b^{\pi}a\\
    b^{\pi}bb^{\pi}a&b^{\pi}b
  \end{array}
\right).$ Write $N^2=P+Q$, where
$$P=\left(
  \begin{array}{cc}
    (b^{\pi}a)^2&b^{\pi}a\\
    0&0
  \end{array}
\right), Q=\left(
  \begin{array}{cc}
    b^{\pi}b&0\\
    b^{\pi}bb^{\pi}a&b^{\pi}b
  \end{array}
\right).$$ It is easy to verify that $$\begin{array}{c}
PQ^2=\left(
  \begin{array}{cc}
    (b^{\pi}a)^2b^{\pi}b&b^{\pi}ab\\
    0&0
  \end{array}
\right)\left(
  \begin{array}{cc}
    b^{\pi}b&0\\
    b^{\pi}bb^{\pi}a&b^{\pi}b
  \end{array}
\right)=0, \\
PQP=\left(
  \begin{array}{cc}
    (b^{\pi}a)^2b^{\pi}b&b^{\pi}ab\\
    0&0
  \end{array}
\right)\left(
  \begin{array}{cc}
    (b^{\pi}a)^2&b^{\pi}a\\
    0&0
  \end{array}
\right)=0.
\end{array}$$ By virtue of~\cite[Theorem 2.4]{CM1},
$N^2$ has g-Drazin inverse. It follows from~\cite[Corollary 2.2]{M} that $N$ has g-Drazin inverse.
In light of Theorem 2.2, the result follows.\end{proof}

\begin{cor} Let $a,b\in \mathcal{A}^d$. If $ab^2=0$ and $aba=0$, then $\left(
  \begin{array}{cc}
    a&1\\
    b&0
  \end{array}
\right)$ has g-Drazin inverse.\end{cor}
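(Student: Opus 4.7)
The plan is to deduce the corollary as an immediate specialization of Theorem~2.6. That theorem requires $a, b, b^{\pi}a \in \mathcal{A}^d$ together with $b^{\pi}(ab^2)=0$ and $b^{\pi}(aba)=0$. Under the strictly stronger hypotheses of the corollary, the two annihilation conditions hold trivially since $ab^2=0$ and $aba=0$, while $a, b \in \mathcal{A}^d$ are given. So the only nontrivial task is to verify $b^{\pi}a \in \mathcal{A}^d$.

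To establish this, I would first leverage $ab^2=0$ together with the standard identity $b^d = b^2(b^d)^3$ (which follows from $b^d b b^d = b^d$ and the commutation $bb^d = b^d b$) to conclude $ab^d = (ab^2)(b^d)^3 = 0$. Since $b$ commutes with $b^d$, this in turn yields $abb^d = ab^d b = 0$, and consequently the key identity $ab^{\pi} = a(1-bb^d) = a$.

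With $ab^{\pi} = a$ in hand, I would invoke Cline's formula (\cite[Theorem 2.1]{LC}) with $x = b^{\pi}$ and $y = a$: since $yx = ab^{\pi} = a \in \mathcal{A}^d$ by hypothesis, Cline's formula delivers $xy = b^{\pi}a \in \mathcal{A}^d$. All hypotheses of Theorem~2.6 are then verified, and applying it produces the g-Drazin inverse of the anti-triangular matrix.

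The only potential obstacle is the condition $b^{\pi}a \in \mathcal{A}^d$, which is not directly postulated. However, the short chain $ab^2=0 \Rightarrow ab^d = 0 \Rightarrow ab^{\pi}=a$ collapses the question to the already assumed g-Drazin invertibility of $a$, so Cline's formula dissolves the difficulty without any further computation. The condition $aba = 0$ plays no role in this reduction; it is used only to trigger the hypothesis $b^{\pi}(aba) = 0$ of Theorem~2.6.
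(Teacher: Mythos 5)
Your proposal is correct and follows essentially the same route as the paper: both reduce to Theorem~2.6 by observing that $ab^2=0$ forces $abb^d=0$, hence $ab^{\pi}=a\in\mathcal{A}^d$, and then invoke Cline's formula to obtain $b^{\pi}a\in\mathcal{A}^d$. The paper merely compresses the first step into the single identity $ab^{\pi}=a-(ab^2)(b^d)^2=a$.
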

\begin{proof} Since $ab^2=0$, $ab^{\pi}=a-(ab^2)(b^d)^2=a\in \mathcal{A}^d$. By Cline's formula, $b^{\pi}a\in \mathcal{A}^d$.
This completes the proof by Theorem 2.6.\end{proof}

\begin{cor} Let $a,b, b^{\pi}a\in \mathcal{A}^d$. If $b^{\pi}ab=0$, then $\left(
  \begin{array}{cc}
    a&1\\
    b&0
  \end{array}
\right)$ has g-Drazin inverse.\end{cor}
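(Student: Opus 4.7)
The plan is to recognize that the single hypothesis $b^{\pi}ab=0$ is a strong enough condition to reduce the corollary to Theorem~2.6, which has already been proved. In particular, I do not need to redo any block-matrix calculation: once the hypotheses of Theorem~2.6 are verified, it applies verbatim.

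First I would multiply the identity $b^{\pi}ab=0$ on the right by $b$ to obtain $b^{\pi}(ab^2)=(b^{\pi}ab)b=0$. Next, multiplying instead on the right by $a$ gives $b^{\pi}(aba)=(b^{\pi}ab)a=0$. These are exactly the two hypotheses of Theorem~2.6, and the hypothesis $a,b,b^{\pi}a\in\mathcal{A}^d$ is shared. So Theorem~2.6 delivers the g-Drazin invertibility of the anti-triangular matrix directly.

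As an aside (not strictly needed for the argument, but reassuring), the assumption $b^{\pi}ab=0$ also yields $b^{\pi}ab^d=0$: since $b(b^d)^2=(bb^d)b^d=(b^db)b^d=b^d$, we get $b^{\pi}ab^d=b^{\pi}ab(b^d)^2=0$. This matches the standing hypothesis of Theorem~2.2, which is the engine behind Theorem~2.6 anyway.

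I expect no real obstacle here; the only subtlety is the bookkeeping check that $b^{\pi}ab=0$ implies both $b^{\pi}ab^2=0$ and $b^{\pi}aba=0$ by right multiplication, together with the reminder that $b^{\pi}a\in\mathcal{A}^d$ is part of the standing hypothesis, so Theorem~2.6 applies immediately.
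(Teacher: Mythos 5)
Your proposal is correct and follows the same route as the paper: reduce to Theorem~2.6 by right-multiplying $b^{\pi}ab=0$ by $b$ and by $a$ to get $b^{\pi}(ab^2)=0$ and $b^{\pi}(aba)=0$. If anything, your verification is cleaner than the paper's own one-line argument, which invokes the (unnecessary and in fact unjustified) claim $(b^{\pi}a)^2=0$ even though $b^{\pi}a\in\mathcal{A}^d$ is already a standing hypothesis.
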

\begin{proof} Since $b^{\pi}ab=0$, $(b^{\pi}a)^2=0$, and so $b^{\pi}a\in \mathcal{A}^d$. So the corollary is true by Theorem 2.6.\end{proof}

\section{additive properties}

In this section we establish some elementary additive properties of g-Drazin inverse in a Banach algebra. The following fact will also be used in our subsequent investigations.

\begin{thm} Let $a,b,ab\in \mathcal{A}^d$. If $ab^2=0,(ab)^{\pi}a(ab)^d=0~\mbox{and}~(ab)^{\pi}$ $aba=0,$ then $a+b\in \mathcal{A}^d$.\end{thm}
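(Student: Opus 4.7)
The plan is to reduce, by two applications of Cline's formula together with the splitting trick of Lemma~2.1, to the g-Drazin invertibility of the anti-triangular matrix $M'=\left(\begin{smallmatrix}a&1\\ ab&0\end{smallmatrix}\right)$, and then to close by applying Theorem~2.6 with parameters $\tilde a=a$ and $\tilde b=ab$. The step I expect to be non-routine is the verification of $(ab)^{\pi}a\in\mathcal{A}^d$---a hypothesis of Theorem~2.6 that does not appear explicitly in the statement of Theorem~3.1---for which I would invoke an upper-triangular Pierce decomposition of~$a$.

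First I would apply Cline's formula to the factorization $a+b=(1,b)\left(\begin{smallmatrix}a\\ 1\end{smallmatrix}\right)$ to replace the problem by showing $\tilde N:=\left(\begin{smallmatrix}a&ab\\ 1&b\end{smallmatrix}\right)\in M_2(\mathcal{A})^d$. Writing $\tilde N=U+V$ with $U=\left(\begin{smallmatrix}a&ab\\ 1&0\end{smallmatrix}\right)$ and $V=\left(\begin{smallmatrix}0&0\\ 0&b\end{smallmatrix}\right)$, the hypothesis $ab^2=0$ gives $UV=0$, and Lemma~2.1 reduces the task to proving $U\in M_2(\mathcal{A})^d$ (since $V$ is immediately g-Drazin invertible). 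Using $U=\left(\begin{smallmatrix}a&1\\ 1&0\end{smallmatrix}\right)\left(\begin{smallmatrix}1&0\\ 0&ab\end{smallmatrix}\right)$, a second appeal to Cline's formula converts this to the assertion $M'\in M_2(\mathcal{A})^d$.

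To verify the equation hypotheses of Theorem~2.6 for $M'$ I would proceed as follows. Right-multiplying $(ab)^{\pi}a(ab)^d=0$ by $ab$ yields $(ab)^{\pi}a\,(ab)(ab)^d=0$, equivalently $(ab)^{\pi}a=(ab)^{\pi}a(ab)^{\pi}$. From $(ab)^{\pi}aba=0$ the identities $(ab)^{\pi}(ab)a=(ab)^{\pi}aba=0$ and $(ab)^{\pi}(ab)^2=(ab)^{\pi}aba\cdot b=0$ are immediate by associativity. Inserting $(ab)^{\pi}a=(ab)^{\pi}a(ab)^{\pi}$ into $(ab)^{\pi}a(ab)a$ and $(ab)^{\pi}a(ab)^2$ makes both expressions vanish, which are precisely the two equation conditions demanded by Theorem~2.6 with $\tilde b=ab$.

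The main remaining ingredient, $(ab)^{\pi}a\in\mathcal{A}^d$, is where I foresee the real work. Setting $e=(ab)(ab)^d$, the identity $(ab)^{\pi}a=(ab)^{\pi}a(ab)^{\pi}$ reads $(1-e)ae=0$, so in the Pierce decomposition with respect to $e$ the element $a$ is block upper-triangular:
$$a=\left(\begin{array}{cc}eae & ea(1-e)\\ 0 & (1-e)a(1-e)\end{array}\right)_e.$$
By the standard criterion for block upper-triangular g-Drazin invertibility, the hypothesis $a\in\mathcal{A}^d$ forces the diagonal Pierce components to be g-Drazin in the respective corner subalgebras $e\mathcal{A}e$ and $(1-e)\mathcal{A}(1-e)$; g-Drazin invertibility in a corner transfers to~$\mathcal{A}$, so in particular $(1-e)a(1-e)=(ab)^{\pi}a$ is g-Drazin invertible. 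Theorem~2.6 then gives $M'\in M_2(\mathcal{A})^d$, and unwinding the two Cline reductions and Lemma~2.1 delivers $a+b\in\mathcal{A}^d$.
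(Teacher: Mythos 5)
Your overall architecture is exactly the paper's: the same two Cline reductions ($a+b\leadsto\left(\begin{array}{cc}a&ab\\1&b\end{array}\right)$ and $U\leadsto M'=\left(\begin{array}{cc}a&1\\ab&0\end{array}\right)$), the same splitting $U+V$ with $UV=0$ handled by Lemma~2.1, and the same endgame via the Section~2 results applied with $\tilde b=ab$. Your only structural deviation is to quote Theorem~2.6 directly for $M'$, where the paper first passes through Theorem~2.2 to reduce $M'$ to $N=\left(\begin{array}{cc}(ab)^{\pi}a&1\\(ab)^{\pi}ab&0\end{array}\right)$ and then applies Corollary~2.7; since Corollary~2.7 is itself a consequence of Theorem~2.6, this is a genuine (and slightly cleaner) shortcut, and your verification of the two equational hypotheses $(ab)^{\pi}a(ab)^2=0$ and $(ab)^{\pi}a(ab)a=0$ via the identity $(ab)^{\pi}a=(ab)^{\pi}a(ab)^{\pi}$ is correct. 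You also correctly put your finger on the real issue: Theorem~2.6 (and equally Theorem~2.2 and Corollary~2.7) requires $(ab)^{\pi}a\in\mathcal{A}^d$, which is not among the stated hypotheses. The paper's own proof simply never addresses this, so you are being more careful than the source.

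Unfortunately, the argument you supply for $(ab)^{\pi}a\in\mathcal{A}^d$ rests on a false principle. It is \emph{not} true that g-Drazin invertibility of a block upper-triangular element forces its diagonal Pierce components to be g-Drazin invertible in the corner algebras: with $S$ the unilateral shift on $\ell^2$, the operator matrix
$$T=\left(\begin{array}{cc}S&I-SS^{*}\\0&S^{*}\end{array}\right)$$
is unitary, hence invertible and a fortiori g-Drazin invertible, yet neither diagonal block is g-Drazin invertible since $\sigma(S)=\sigma(S^{*})$ is the closed unit disc and $0$ is an accumulation point of it. The correct version of the triangular lemma --- the one the paper invokes as \cite[Lemma 2.2]{ZM} in the proof of Theorem~2.2 --- is a ``two out of three'' statement: from $x\in\mathcal{A}^d$ you may conclude that one diagonal block is g-Drazin invertible only if the \emph{other} diagonal block is already known to be. In your Pierce decomposition of $a$ with respect to $e=(ab)(ab)^d$ you know nothing a priori about $eae$, so the deduction that $(1-e)a(1-e)=(ab)^{\pi}a$ lies in $\mathcal{A}^d$ does not go through. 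As it stands this step is a genuine gap; either $(ab)^{\pi}a\in\mathcal{A}^d$ should be added to the hypotheses of the theorem (which is what a faithful reading of the paper's own argument effectively requires), or a different argument exploiting the extra relations (e.g.\ $(ab)^d b=0$, which follows from $ab^2=0$) must be found for $eae\in(e\mathcal{A}e)^d$ before the two-out-of-three lemma can be applied.
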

\begin{proof} Obviously, we have $a+b=(1,b)\left(
\begin{array}{c}
a\\
1
\end{array}
\right)$. In view of Cline's formula, it suffices to prove
$$M=\left(
\begin{array}{c}
a\\
1
\end{array}
\right)(1,b)=\left(
\begin{array}{cc}
a&ab\\
1&b
\end{array}
\right)$$  has g-Drazin inverse. Write $M=K+L,$ where $$K=\left(
\begin{array}{cc}
a&ab\\
1&0
\end{array}
\right), L=\left(
\begin{array}{cc}
0&0\\
0&b
\end{array}
\right).$$
Let $H=\left(
\begin{array}{cc}
a&1\\
ab&0
\end{array}
\right)$ and $N=\left(
\begin{array}{cc}
(ab)^{\pi}a&1\\
(ab)^{\pi}ab&0
\end{array}
\right).$ One easily checks that
$$\begin{array}{c}
(ab)^{\pi}a[(ab)^{\pi}ab]^2=(ab)^{\pi}a(ab)^{\pi}(aba)b=0,\\
(ab)^{\pi}a[(ab)^{\pi}ab](ab)^{\pi}a=(ab)^{\pi}a(ab)^{\pi}(aba)=0.
\end{array}$$ In light of Corollary 2.7, $N$ has g-Drazin inverse. By hypothesis, $(ab)^{\pi}a(ab)^d=0$. According to Theorem 2.2, $H$ has g-Drazin inverse.
Clearly, $$H=\left(
\begin{array}{cc}
1&0\\
0&ab
\end{array}
\right)\left(
\begin{array}{cc}
a&1\\
1&0
\end{array}
\right), K=\left(
\begin{array}{cc}
a&1\\
1&0
\end{array}
\right)\left(
\begin{array}{cc}
1&0\\
0&ab
\end{array}
\right).$$ By using Cline's formula, $K$ has g-Drazin inverse. Since $ab^2=0$, we have $KL=0$. In light of Lemma 2.1, $M$ has g-Drazin inverse. Therefore $a+b\in \mathcal{A}^d$.\end{proof}

\begin{cor} Let $a,b,ab\in \mathcal{A}^d$. If $a^2b=0, (ab)^db(ab)^{\pi}=0~\mbox{and}~bab$ $(ab)^{\pi}=0,$ then $a+b\in \mathcal{A}^d$.
\end{cor}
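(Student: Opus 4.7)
The corollary is the left--right mirror of Theorem 3.1: every hypothesis has been obtained from the corresponding hypothesis of Theorem 3.1 by reversing the order of multiplication ($ab^2=0$ becomes $a^2b=0$, $(ab)^{\pi}a(ab)^d=0$ becomes $(ab)^db(ab)^{\pi}=0$, and $(ab)^{\pi}aba=0$ becomes $bab(ab)^{\pi}=0$). The cleanest plan is therefore to deduce the statement from Theorem 3.1 applied inside the opposite Banach algebra $\mathcal{A}^{\mathrm{op}}$. The key observation making this work is that g-Drazin invertibility, the g-Drazin inverse, the spectral idempotent, and quasinilpotency are intrinsic notions: $x\in\mathcal{A}^d$ iff $x\in(\mathcal{A}^{\mathrm{op}})^d$, with the same inverse $x^d$ and the same $x^{\pi}$, because the defining equations $x=xyx$, $xy=yx$, $x-x^2y\in\mathcal{A}^{\mathrm{qnil}}$ are symmetric under reversal of multiplication.

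Concretely, I would apply Theorem 3.1 in $\mathcal{A}^{\mathrm{op}}$ with the substitution $a'=b$, $b'=a$. The product $a'\cdot_{\mathrm{op}} b'$ equals $b'a'=ab$ in $\mathcal{A}$, so $(a'b')^d$ and $(a'b')^{\pi}$ are just $(ab)^d$ and $(ab)^{\pi}$. Then I would check the four hypotheses of Theorem 3.1 one by one, unfolding each $\cdot_{\mathrm{op}}$-product to get a product in $\mathcal{A}$ with factors in reversed order. The invertibility hypothesis becomes $a,b,ab\in\mathcal{A}^d$ (given; note $ab\in\mathcal{A}^d\Leftrightarrow ba\in\mathcal{A}^d$ by Cline's formula, so nothing extra is needed). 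The condition $a'(b')^2=0$ unfolds to $b'b'a'=a\cdot a\cdot b=a^2b=0$. The condition $(a'b')^{\pi}a'(a'b')^d=0$ unfolds to $(ab)^d\cdot b\cdot(ab)^{\pi}=(ab)^db(ab)^{\pi}=0$. The condition $(a'b')^{\pi}a'b'a'=0$ unfolds to $a'\cdot b'\cdot a'\cdot(ab)^{\pi}=b\cdot a\cdot b\cdot(ab)^{\pi}=bab(ab)^{\pi}=0$. All four are exactly the hypotheses in the corollary, so Theorem 3.1 in $\mathcal{A}^{\mathrm{op}}$ gives $a'+b'\in(\mathcal{A}^{\mathrm{op}})^d$, and since addition is identical in $\mathcal{A}$ and $\mathcal{A}^{\mathrm{op}}$, this is $a+b\in\mathcal{A}^d$.

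The main (and essentially only) thing to be careful about is the bookkeeping for reversing products when translating between $\mathcal{A}$ and $\mathcal{A}^{\mathrm{op}}$; it is not a substantive obstacle, merely a mechanical check. As an alternative route one could mimic the proof of Theorem 3.1 directly, factoring $a+b=(b,1)\binom{1}{a}$ and using Cline's formula to reduce to the g-Drazin invertibility of $\bigl(\begin{smallmatrix}b&1\\ab&a\end{smallmatrix}\bigr)$, then splitting this matrix as $\bigl(\begin{smallmatrix}b&1\\ab&0\end{smallmatrix}\bigr)+\bigl(\begin{smallmatrix}0&0\\0&a\end{smallmatrix}\bigr)$ and observing that the product of the second factor with the first is zero thanks to $a^2b=0$ (so Lemma 2.1 applies after reversing the summands). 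However, this route still forces one to establish a mirrored version of Theorem 2.2 to handle $\bigl(\begin{smallmatrix}b&1\\ab&0\end{smallmatrix}\bigr)$ under conditions written with $(ab)^{\pi}$ on the right, so the opposite-algebra argument above is strictly shorter and conceptually cleaner.
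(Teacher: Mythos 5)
Your proposal is correct and is exactly the paper's own argument: the paper proves this corollary by passing to the opposite Banach algebra $(\mathcal{A},*)$ with $a*b=b\cdot a$ and applying Theorem 3.1 there. You simply spell out the substitution $a'=b$, $b'=a$ and the hypothesis translation more explicitly than the paper does.
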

\begin{proof} Since $(\mathcal{A},\cdot)$ is a Banach algebra, $(\mathcal{A},*)$ is a Banach algebra with the multiplication $a*b=b\cdot a$. Then we complete the proof by applying Theorem 3.1 to the Banach algebra $(\mathcal{A},*)$.\end{proof}

We are now ready to generalize ~\cite[Theorem 3.1]{S} as follow:

\begin{thm} Let $a,b,ab\in \mathcal{A}^d$. If $ab^2=0$ and $(ab)^{\pi}a^2ba=0,$ then $a+b\in \mathcal{A}^d$.
\end{thm}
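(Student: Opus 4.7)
The plan is to follow the strategy of Theorem 3.1. Since $a+b=(1,b)\binom{a}{1}$, Cline's formula reduces the problem to showing that
\[
M=\binom{a}{1}(1,b)=\begin{pmatrix} a & ab\\ 1 & b\end{pmatrix}\in\mathcal{A}^d.
\]
Split $M=K+L$ with $K=\begin{pmatrix} a & ab\\ 1 & 0\end{pmatrix}$ and $L=\begin{pmatrix} 0 & 0\\ 0 & b\end{pmatrix}$. The hypothesis $ab^2=0$ forces $KL=0$, so by Lemma 2.1 it is enough to verify $K,L\in\mathcal{A}^d$. The case of $L$ is clear from $b\in\mathcal{A}^d$.

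For $K$, I would apply Cline's formula to the factorization $K=\begin{pmatrix} a & 1\\ 1 & 0\end{pmatrix}\begin{pmatrix} 1 & 0\\ 0 & ab\end{pmatrix}$ to reduce the question to the g-Drazin invertibility of
\[
H=\begin{pmatrix} 1 & 0\\ 0 & ab\end{pmatrix}\begin{pmatrix} a & 1\\ 1 & 0\end{pmatrix}=\begin{pmatrix} a & 1\\ ab & 0\end{pmatrix}.
\]
To handle $H$ I would invoke Theorem 2.6 with the pair $(a,ab)$ replacing $(a,b)$. The two conditions it demands translate to $(ab)^{\pi}\bigl(a(ab)^2\bigr)=0$ and $(ab)^{\pi}\bigl(a(ab)a\bigr)=0$. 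The second reads $(ab)^{\pi}a^2ba=0$, which is the given hypothesis. The first is the substantive computation
\[
(ab)^{\pi}a(ab)^2=(ab)^{\pi}a^2bab=\bigl[(ab)^{\pi}a^2ba\bigr]\,b=0,
\]
whose key observation is that the hypothesis still vanishes after right-multiplication by $b$.

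The remaining subtlety—already implicit in Corollary 2.7 and in the proof of Theorem 3.1—is verifying the auxiliary requirement $(ab)^{\pi}a\in\mathcal{A}^d$ demanded by Theorem 2.6; I plan to handle it in the same manner as in Corollary 2.7, by combining $a\in\mathcal{A}^d$ with Cline's formula. Once $H\in\mathcal{A}^d$ is in hand, Cline's formula gives $K\in\mathcal{A}^d$, Lemma 2.1 gives $M\in\mathcal{A}^d$, and a final application of Cline's formula to $M=\binom{a}{1}(1,b)$ delivers $a+b\in\mathcal{A}^d$. The main obstacle is identifying the correct pair to feed into Theorem 2.6 and carrying out the bootstrap verification of the first condition; once $(ab)^{\pi}a^2ba=0$ is multiplied through by $b$, both hypotheses fall into place.
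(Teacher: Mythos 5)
Your argument is essentially identical to the paper's proof of this theorem: the same Cline-formula reduction of $a+b$ to $M=\left(\begin{smallmatrix}a&ab\\1&b\end{smallmatrix}\right)$, the same splitting $M=K+L$ with $KL=0$, the same passage from $K$ to $H=\left(\begin{smallmatrix}a&1\\ab&0\end{smallmatrix}\right)$ via Cline's formula, and the same application of Theorem 2.6 to the pair $(a,ab)$, with the two conditions $(ab)^{\pi}a(ab)^2=0$ and $(ab)^{\pi}a(ab)a=0$ verified exactly as you do. You are in fact slightly more careful than the paper, which never addresses the auxiliary hypothesis $(ab)^{\pi}a\in\mathcal{A}^d$ of Theorem 2.6 that you flag --- though be aware that the Corollary 2.7 trick does not transfer verbatim, since $a(ab)^{\pi}=a-a^2b(ab)^d$ need not reduce to $a$ under the present hypotheses.
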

\begin{proof} Let $M=\left(
\begin{array}{cc}
a&ab\\
1&b
\end{array}
\right).$ Write $M=K+L,$ where $$K=\left(
\begin{array}{cc}
a&ab\\
1&0
\end{array}
\right), L=\left(
\begin{array}{cc}
0&0\\
0&b
\end{array}
\right).$$
Let $H=\left(
\begin{array}{cc}
a&1\\
ab&0
\end{array}
\right).$ By hypothesis, we check that
$$(ab)^{\pi}a(ab)^2=0,(ab)^{\pi}a(ab)a=0.$$
According to Theorem 2.6, $H$ is g-Drazin inverse. As in the proof of Theorem 3.1, by using Cline's formula,
$K$ has g-Drazin inverse. Since $ab^2=0$, it follows by Lemma 2.1 that $M$ has g-Drazin inverse.
Observing that $$\begin{array}{rll}
a+b&=&(1,b)\left(
\begin{array}{c}
a\\
1
\end{array}
\right),\\
M&=&\left(
\begin{array}{c}
a\\
1
\end{array}
\right)(1,b),
\end{array}$$ by using Cline's formula again, $a+b$ has g-Drazin inverse.\end{proof}

\begin{cor} Let $a,b,ab\in \mathcal{A}^d$. If $a^2b=0$ and $bab^2(ab)^{\pi}=0,$ then $a+b\in \mathcal{A}^d$.
\end{cor}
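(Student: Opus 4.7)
The plan is to deduce Corollary 3.4 from Theorem 3.3 by exactly the same opposite-algebra trick that produced Corollary 3.2 from Theorem 3.1. Equip $\mathcal{A}$ with the reversed product $x*y := y\cdot x$; then $(\mathcal{A},*)$ is again a Banach algebra, and the notions that involve only a single element---g-Drazin invertibility, the g-Drazin inverse $x^d$, and the spectral idempotent $x^{\pi}$---coincide in $(\mathcal{A},\cdot)$ and $(\mathcal{A},*)$, because their defining relations are self-opposite. Products of distinct elements, however, reverse: $x_1*x_2*\cdots *x_n = x_n\cdot x_{n-1}\cdots x_1$.

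The main step is the bookkeeping translation. I would apply Theorem 3.3 in $(\mathcal{A},*)$, taking the theorem's first variable to be our $b$ and its second variable to be our $a$. The single-element hypotheses transfer for free: $a,b\in (\mathcal{A},*)^d$, and $b*a=ab\in \mathcal{A}^d$; also $(b*a)^{\pi}=(ab)^{\pi}$. The condition ``$ab^2=0$'' of Theorem 3.3, read in $(\mathcal{A},*)$ under this substitution, becomes $b*a*a$, which by the reversal rule equals $a\cdot a\cdot b=a^2b$; this is our first hypothesis. The condition ``$(ab)^{\pi}a^2ba=0$'' becomes $(ab)^{\pi}*b*b*a*b$, which reverses to $b\cdot a\cdot b\cdot b\cdot(ab)^{\pi}=bab^2(ab)^{\pi}$; this is our second hypothesis. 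Theorem 3.3 then yields $b+a\in (\mathcal{A},*)^d=\mathcal{A}^d$, and hence $a+b\in \mathcal{A}^d$.

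The only point at which any care is needed---bookkeeping rather than a genuine obstacle---is the order-reversal on the five-term monomial $(ab)^{\pi}*b*b*a*b$, where one must be careful to apply the substitution $a\leftrightarrow b$ before translating each $*$-product back to $\cdot$. Since the skeleton of the argument has already been validated by the short proof of Corollary 3.2, no new idea is required beyond this transcription.
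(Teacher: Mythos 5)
Your proposal is correct and is precisely the paper's argument: the paper proves Corollary 3.4 by the same opposite-algebra reduction to Theorem 3.3 that Corollary 3.2 uses for Theorem 3.1 (the paper merely says ``Similarly to Corollary 3.2''), and your translation of the hypotheses $b*a*a=a^2b$ and $(ab)^{\pi}*b*b*a*b=bab^2(ab)^{\pi}$ checks out.
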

\begin{proof} Similarly to Corollary 3.2, we obtain the result by Theorem 3.3.\end{proof}

\begin{cor} Let $a,b,ab\in \mathcal{A}^d$. If $ab^2=0$ and $a^2ba=0,$  then $a+b\in \mathcal{A}^d$.
\end{cor}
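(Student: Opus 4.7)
The plan is to observe that Corollary 3.5 is an immediate specialization of Theorem 3.3, so the entire proof reduces to verifying that the stronger hypothesis $a^2 ba = 0$ implies the weaker hypothesis $(ab)^\pi a^2 b a = 0$ that was used in Theorem 3.3.

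First I would confirm the two hypotheses of Theorem 3.3. The condition $ab^2 = 0$ is given directly. For the second condition, I would simply multiply $a^2 ba = 0$ on the left by the spectral idempotent $(ab)^\pi$ to obtain
\[
(ab)^\pi a^2 b a = (ab)^\pi \cdot 0 = 0,
\]
which is exactly the hypothesis required by Theorem 3.3.

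With both premises verified, I would invoke Theorem 3.3 directly to conclude that $a+b \in \mathcal{A}^d$. There is no genuine obstacle here; the corollary is a strict weakening of the hypothesis on the $a^2 ba$ term, and no further structural argument (Cline's formula, block-matrix splitting, or additive formula from Lemma 2.1) needs to be revisited. The only thing to check is that the g-Drazin invertibility assumptions on $a$, $b$, and $ab$ carry over verbatim, which they do since they are identical in the two statements.
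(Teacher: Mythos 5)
Your proposal is correct and matches the paper's approach exactly: the paper also derives this corollary as an immediate consequence of Theorem 3.3, since $a^2ba=0$ trivially gives $(ab)^{\pi}a^2ba=0$. Nothing further is needed.
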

\begin{proof} This is obvious by Theorem 3.3.\end{proof}

\section{operator matrices over Banach spaces}

In this section we apply our results to establish g-Drazin invertibility for the block operator matrix $M$ as in $(*)$. Throughout this section, we always assume that $A,D,BC\in \mathcal{L}(X)^d$. We come now to extend ~\cite[Theorem 3.1]{Y} as follows.

\begin{thm} If $(BC)^{\pi}ABCA=0, (BC)^{\pi}ABCB=0, DCA=0$ and $DCB=0$, then $M$ has g-Drazin inverse.
\end{thm}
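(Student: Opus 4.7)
The strategy is to split $M = K + L$, where
\[
K = \begin{pmatrix} A & B \\ C & 0 \end{pmatrix}, \qquad L = \begin{pmatrix} 0 & 0 \\ 0 & D \end{pmatrix},
\]
and apply Theorem~3.1 with $a = L$, $b = K$. The hypotheses $DCA = 0$ and $DCB = 0$ give $(LK)K = 0$, so $LK^2 = 0$ and $(LK)^2 = 0$; hence $LK = \begin{pmatrix} 0 & 0 \\ DC & 0 \end{pmatrix}$ is nilpotent with $(LK)^d = 0$ and $(LK)^\pi = I$. The remaining Theorem~3.1 conditions $(ab)^\pi a(ab)^d = 0$ and $(ab)^\pi aba = 0$ collapse to $L\cdot 0 = 0$ and $LKL = 0$, both trivial. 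Thus everything reduces to showing $K$ has g-Drazin inverse.

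For this I would reduce twice via Cline's formula: the factorization
\[
K = \begin{pmatrix} I & 0 \\ 0 & C \end{pmatrix}\begin{pmatrix} A & B \\ I & 0 \end{pmatrix}
\]
has swapped product $\begin{pmatrix} A & BC \\ I & 0 \end{pmatrix}$, and then $\begin{pmatrix} A & BC \\ I & 0 \end{pmatrix} = \begin{pmatrix} A & I \\ I & 0 \end{pmatrix}\begin{pmatrix} I & 0 \\ 0 & BC \end{pmatrix}$ has swap $H = \begin{pmatrix} A & I \\ BC & 0 \end{pmatrix}$. So $K$ is g-Drazin invertible iff $H$ is. To $H$ I would apply Theorem~2.2 with $a = A$, $b = BC$: first $(BC)^\pi A(BC)^2 = (BC)^\pi ABCB\cdot C = 0$, and combined with $(BC)^d = (BC)^d(BC)(BC)^d$ this gives $(BC)^\pi A(BC)^d = 0$. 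Condition~(2) of Theorem~2.2, namely g-Drazin invertibility of $\begin{pmatrix} (BC)^\pi A & 1 \\ BC(BC)^\pi & 0 \end{pmatrix}$, would then be verified via Corollary~2.7 applied to $\alpha = (BC)^\pi A$ and $\beta = BC(BC)^\pi$ (the latter lies in $\mathcal{L}(X)^{qnil}$, hence in $\mathcal{L}(X)^d$). The hypotheses $\alpha\beta^2 = 0$ and $\alpha\beta\alpha = 0$ reduce to $(BC)^\pi A(BC)^2 = 0$ and $(BC)^\pi ABCA = 0$ respectively, the second after absorbing the middle idempotent via the identity $(BC)^\pi A(BC)^\pi = (BC)^\pi A$.

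The main obstacle will be verifying the prerequisite $\alpha = (BC)^\pi A \in \mathcal{L}(X)^d$ required by Corollary~2.7. The identity $(BC)^\pi A = (BC)^\pi A(BC)^\pi$, which follows from $(BC)^\pi ABC(BC)^d = 0$ (a consequence of $(BC)^\pi ABCB = 0$), places $(BC)^\pi A$ inside the corner subalgebra $(BC)^\pi\mathcal{L}(X)(BC)^\pi$. Within this corner $BC$ restricts to the quasinilpotent $BC(BC)^\pi$, and one checks $((BC)^\pi A\cdot BC)^2 = 0$ by direct computation from $(BC)^\pi ABCA = 0$. Constructing the g-Drazin inverse of $(BC)^\pi A$ explicitly in this corner, possibly via the Drazin decomposition $(BC)^\pi A = (BC)^\pi A^2 A^d + (BC)^\pi A A^\pi$ combined with Lemma~2.1, should close this last gap. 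With $K$ g-Drazin invertible in hand, Theorem~3.1 then delivers $M = L + K \in \mathcal{L}(X \oplus Y)^d$.
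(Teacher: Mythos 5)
Your reduction of the theorem to the g-Drazin invertibility of $K=\left(\begin{smallmatrix} A& B\\ C&0\end{smallmatrix}\right)$ is correct and genuinely different from the paper's route: the paper writes $M=P+Q$ with $P=\left(\begin{smallmatrix} A&B\\0&D\end{smallmatrix}\right)$ upper triangular and $Q=\left(\begin{smallmatrix}0&0\\ C&0\end{smallmatrix}\right)$ nilpotent, checks $PQ^2=0$ and $(PQ)^{\pi}P^2QP=0$, and invokes Theorem 3.3, whereas you peel off $D$ and push everything into $K$ via Theorem 3.1. Your verification of the Theorem 3.1 conditions for $L,K$, the two applications of Cline's formula reducing $K$ to $H=\left(\begin{smallmatrix} A&I\\ BC&0\end{smallmatrix}\right)$, and the identities $(BC)^{\pi}A(BC)^d=0$, $\alpha\beta^2=0$, $\alpha\beta\alpha=0$ are all correct.

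The obstacle you flag, however, is fatal rather than a closable last step. Theorem 2.2 and Corollary 2.7 both take $b^{\pi}a\in\mathcal{A}^d$ --- here $(BC)^{\pi}A\in\mathcal{L}(X)^d$ --- as a standing hypothesis, and nothing in Theorem 4.1 supplies it. Your proposed repair does not work: in $(BC)^{\pi}A=(BC)^{\pi}A^2A^d+(BC)^{\pi}AA^{\pi}$ neither summand is known to be g-Drazin invertible and neither order of their product vanishes, so Lemma 2.1 is inapplicable; and the corner identity $(BC)^{\pi}A=(BC)^{\pi}A(BC)^{\pi}$ does not help, since a compression $eAe$ of a g-Drazin invertible $A$ with $eA(1-e)=0$ need not be g-Drazin invertible. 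Concretely, let $S$ be the unilateral shift, $X=\ell^2\oplus\ell^2$, $A=\left(\begin{smallmatrix} S^{*}&0\\ I-SS^{*}&S\end{smallmatrix}\right)$ (unitary), $Y=\ell^2$, $By=(0,y)$, $C(x_1,x_2)=x_2$, $D=0$. Then $BC$ is the projection onto the second summand, $(BC)^{\pi}ABC=0$, so all four hypotheses hold, yet $(BC)^{\pi}A=\left(\begin{smallmatrix}S^{*}&0\\0&0\end{smallmatrix}\right)$ has the closed unit disk as spectrum and is not g-Drazin invertible. Worse, in this example every $\lambda$ with $0<|\lambda|<1/2$ is an eigenvalue of $M$ (take $x_1=\sum\lambda^ne_n$, $y=-(\lambda S+1-\lambda^2)^{-1}e_0$, $x_2=\lambda y$), so $M$ itself fails to be g-Drazin invertible: the statement needs the additional hypothesis $(BC)^{\pi}A\in\mathcal{L}(X)^d$, which the paper's own chain Theorem 3.3 $\to$ Theorem 2.6 also requires implicitly through $(PQ)^{\pi}P$. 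With that hypothesis added, your argument goes through as written.
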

\begin{proof}  Write $M=P+Q$, where
$$P=\left(
\begin{array}{cc}
A&B\\
0&D
\end{array}
\right), Q=\left(
\begin{array}{cc}
0&0\\
C&0
\end{array}
\right).$$ Clearly, $Q^2=0$, and so $PQ^2=0$. Moreover, we have
$$\begin{array}{rll}
PQ&=&\left(
\begin{array}{cc}
BC&0\\
DC&0
\end{array}
\right),\\
(PQ)^d&=&\left(
\begin{array}{cc}
(BC)^d&0\\
DC[(BC)^d]^2&0
\end{array}
\right),\\
(PQ)^{\pi}&=&\left(
\begin{array}{cc}
(BC)^{\pi}&0\\
-DC(BC)^d&I
\end{array}
\right).
\end{array}$$
We easily check that
$$\begin{array}{ll}
&(PQ)^{\pi}P^2QP\\
=&\left(
\begin{array}{cc}
(BC)^{\pi}&0\\
-DC(BC)^d&I
\end{array}
\right)\left(
\begin{array}{cc}
A&B\\
0&D
\end{array}
\right)\left(
\begin{array}{cc}
BC&0\\
DC&0
\end{array}
\right)\left(
\begin{array}{cc}
A&B\\
0&D
\end{array}
\right)\\
=&\left(
\begin{array}{cc}
(BC)^{\pi}A&(BC)^{\pi}B\\
0&D
\end{array}
\right)\left(
\begin{array}{cc}
BCA&BCB\\
0&0
\end{array}
\right)\\
=&0.
\end{array}$$
Therefore we complete the proof by Theorem 3.3.\end{proof}

\begin{cor} If $(BC)^{\pi}ABCA=0, (BC)^{\pi}ABCB=0, BDC=0$ and $BD^2=0$, then $M$ has g-Drazin inverse.\end{cor}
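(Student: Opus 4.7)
My plan is to mirror the strategy used in Theorem 4.1 but with the complementary block decomposition of $M$ — the one naturally dual to the hypotheses $BDC=0$ and $BD^{2}=0$. Concretely, write $M=P+Q$ with
\[
P=\begin{pmatrix}A&B\\C&0\end{pmatrix},\qquad Q=\begin{pmatrix}0&0\\0&D\end{pmatrix},
\]
and apply Theorem 3.3 to $(a,b)=(P,Q)$. I would first verify the two arithmetic hypotheses of Theorem 3.3. Since $PQ^{2}=\begin{pmatrix}0&BD^{2}\\0&0\end{pmatrix}$, the assumption $BD^{2}=0$ gives $PQ^{2}=0$. Next, $PQ=\begin{pmatrix}0&BD\\0&0\end{pmatrix}$ satisfies $(PQ)^{2}=0$, so $PQ$ is nilpotent with $(PQ)^{d}=0$ and $(PQ)^{\pi}=I$; using $BDC=0$, a direct computation gives $PQ\cdot P=\begin{pmatrix}BDC&0\\0&0\end{pmatrix}=0$, whence $(PQ)^{\pi}P^{2}QP=P\,(PQ)\,P=0$. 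Moreover $Q\in\mathcal{L}(X\oplus Y)^{d}$ because $D\in\mathcal{L}(Y)^{d}$, and $PQ\in\mathcal{L}(X\oplus Y)^{d}$ by nilpotence.

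The only real obstacle is to establish $P\in\mathcal{L}(X\oplus Y)^{d}$, which Theorem 3.3 also requires. Unlike the triangular block $P$ appearing in the proof of Theorem 4.1, our $P$ is anti-triangular, so its g-Drazin invertibility is not automatic from $A,D,BC\in\mathcal{L}(X)^{d}$. I would handle it by a second application of Theorem 3.3, to the inner split $P=P_{1}+P_{2}$ with
\[
P_{1}=\begin{pmatrix}A&B\\0&0\end{pmatrix},\qquad P_{2}=\begin{pmatrix}0&0\\C&0\end{pmatrix}.
\]
Here $P_{2}^{2}=0$, so $P_{1}P_{2}^{2}=0$ is automatic. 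The factorization $P_{1}=\begin{pmatrix}I_{X}\\0\end{pmatrix}\begin{pmatrix}A&B\end{pmatrix}$ together with $\begin{pmatrix}A&B\end{pmatrix}\begin{pmatrix}I_{X}\\0\end{pmatrix}=A\in\mathcal{L}(X)^{d}$ yields $P_{1}\in\mathcal{L}(X\oplus Y)^{d}$ via Cline's formula, while $P_{1}P_{2}=\begin{pmatrix}BC&0\\0&0\end{pmatrix}$ has g-Drazin inverse because $BC$ does, with spectral idempotent $(P_{1}P_{2})^{\pi}=\begin{pmatrix}(BC)^{\pi}&0\\0&I\end{pmatrix}$.

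Finally, a short calculation gives $P_{1}^{2}P_{2}P_{1}=\begin{pmatrix}ABCA&ABCB\\0&0\end{pmatrix}$, so the remaining two hypotheses $(BC)^{\pi}ABCA=0$ and $(BC)^{\pi}ABCB=0$ combine to yield $(P_{1}P_{2})^{\pi}P_{1}^{2}P_{2}P_{1}=0$. Theorem 3.3 then produces $P\in\mathcal{L}(X\oplus Y)^{d}$, and feeding this back into the outer step, a second application of Theorem 3.3 to $M=P+Q$ delivers $M\in\mathcal{L}(X\oplus Y)^{d}$, as claimed. Apart from the inner argument establishing $P\in\mathcal{L}(X\oplus Y)^{d}$, every verification is a routine block computation.
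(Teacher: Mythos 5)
Your proposal is correct and follows essentially the same route as the paper: the paper also splits $M=P+Q$ with $P=\left(\begin{smallmatrix}A&B\\C&0\end{smallmatrix}\right)$, $Q=\left(\begin{smallmatrix}0&0\\0&D\end{smallmatrix}\right)$, checks $PQ^{2}=0$ and $PQP=0$ from $BD^{2}=0$ and $BDC=0$, and concludes via an additive lemma (it cites~\cite[Theorem 2.4]{CM1} where you invoke Theorem 3.3 --- both apply since $PQP=0$). The only difference is that your ``inner'' argument for $P\in\mathcal{L}(X\oplus Y)^{d}$ re-derives exactly the $D=0$ case of Theorem 4.1, which the paper simply cites directly (the hypotheses $DCA=0$, $DCB=0$ of Theorem 4.1 being vacuous for $P$).
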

\begin{proof} Write $M=P+Q$, where
$$P=\left(
\begin{array}{cc}
A&B\\
C&0
\end{array}
\right), Q=\left(
  \begin{array}{cc}
    0 & 0 \\
    0 & D
  \end{array}
\right).$$ In light of Theorem 4.1, $P$ has g-Drazin inverse. Since $PQ^2=0$ and $PQP=0$, we complete the proof by~\cite[Theorem 2.4]{CM1}.\end{proof}

\begin{thm} If $(BC)^{\pi}A(BC)^d=0, (BC)^{\pi}BCA=0, (BC)^{\pi}$ $BCB=0, DCA=0$ and $DCB=0$, then $M$ has g-Drazin inverse.\end{thm}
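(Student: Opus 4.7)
The plan is to mimic the proof of Theorem 4.1, again writing $M=P+Q$ with
$$P=\left(\begin{array}{cc}A&B\\0&D\end{array}\right),\qquad Q=\left(\begin{array}{cc}0&0\\C&0\end{array}\right),$$
and applying an additive splitting to the pair $(P,Q)$. Since $P$ is block upper triangular with $A\in\mathcal{L}(X)^d$ and $D\in\mathcal{L}(Y)^d$, it lies in $\mathcal{L}(X\oplus Y)^d$ (a routine application of Lemma 2.1 after writing $P$ as the sum of $\left(\begin{smallmatrix}A&B\\0&0\end{smallmatrix}\right)$ and $\left(\begin{smallmatrix}0&0\\0&D\end{smallmatrix}\right)$, the second times the first being zero); and $Q^2=0$ yields both $Q\in\mathcal{L}(X\oplus Y)^d$ and $PQ^2=0$, which covers the hypothesis ``$ab^2=0$''. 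This time I would invoke Theorem 3.1 rather than Theorem 3.3, since the extra assumption $(BC)^{\pi}A(BC)^d=0$ is tailored exactly to the factor $(ab)^{\pi}a(ab)^d$ appearing there.

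As in the proof of Theorem 4.1 I would compute $PQ=\left(\begin{smallmatrix}BC&0\\DC&0\end{smallmatrix}\right)$ and take as candidate
$$(PQ)^d=\left(\begin{array}{cc}(BC)^d&0\\DC[(BC)^d]^2&0\end{array}\right).$$
The single observation that drives everything that follows is this: $DCB=0$ forces $DC\cdot BC=0$, so expanding $I=(BC)(BC)^d+(BC)^{\pi}$ on the right of $DC$ yields $DC=DC(BC)^{\pi}$, and hence $DC(BC)^d=DC(BC)^{\pi}(BC)^d=0$. This identity kills the off-diagonal entry of the spectral idempotent, which collapses to $(PQ)^{\pi}=\left(\begin{smallmatrix}(BC)^{\pi}&0\\0&I\end{smallmatrix}\right)$.

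With $DC(BC)^d=0$ in hand, the two remaining hypotheses of Theorem 3.1 can be verified blockwise. The condition $(PQ)^{\pi}P(PQ)^d=0$ reduces in the $(1,1)$ entry to $(BC)^{\pi}A(BC)^d+(BC)^{\pi}B\cdot DC(BC)^d\cdot(BC)^d$, which vanishes by the given $(BC)^{\pi}A(BC)^d=0$ together with $DC(BC)^d=0$; the other three blocks vanish for the same reasons. The condition $(PQ)^{\pi}PQP=0$ is even cleaner: $DCA=DCB=0$ wipes out the bottom row of $PQP$, and then the surviving top row is annihilated by $(PQ)^{\pi}$ using $(BC)^{\pi}BCA=0$ and $(BC)^{\pi}BCB=0$. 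Theorem 3.1 then delivers $M=P+Q\in\mathcal{L}(X\oplus Y)^d$.

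The only non-routine step is the extraction of the identity $DC(BC)^d=0$ from the raw hypothesis $DCB=0$; once that is recognised, the rest is essentially bookkeeping parallel to the proof of Theorem 4.1, with the stronger Theorem 3.1 substituted for Theorem 3.3.
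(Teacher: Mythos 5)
Your proof is correct and follows essentially the same route as the paper: the identical splitting $M=P+Q$ with $P$ upper triangular and $Q$ strictly lower triangular, the same formulas for $(PQ)^d$ and $(PQ)^{\pi}$, and the same appeal to Theorem 3.1 after checking $PQ^2=0$, $(PQ)^{\pi}P(PQ)^d=0$ and $(PQ)^{\pi}PQP=0$. Your extracted identity $DC(BC)^d=0$ (from $DCB=0$) is a nice touch that the paper uses only implicitly --- it is in fact needed to justify dropping the term $BDC[(BC)^d]^2$ when the paper computes $P(PQ)^d$ --- so your write-up actually makes that step more transparent.
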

\begin{proof} Write $M=P+Q$, where
$$P=\left(
\begin{array}{cc}
A&B\\
0&D
\end{array}
\right), Q=\left(
\begin{array}{cc}
0&0\\
C&0
\end{array}
\right).$$ Then $Q^2=0$ and $PQ=\left(
\begin{array}{cc}
BC&0\\
DC&0
\end{array}
\right);$ hence,
$$(PQ)^d=\left(
\begin{array}{cc}
(BC)^d&0\\
DC[(BC)^d]^2&0
\end{array}
\right), (PQ)^{\pi}=\left(
\begin{array}{cc}
(BC)^{\pi}&0\\
-DC(BC)^d&I
\end{array}
\right).$$
By hypothesis, we verify that
$$\begin{array}{ll}
&(PQ)^{\pi}P(PQ)^d\\
=&\left(
\begin{array}{cc}
(BC)^{\pi}&0\\
-DC(BC)^d&1
\end{array}
\right)\left(
\begin{array}{cc}
A&B\\
0&D
\end{array}
\right)\left(
\begin{array}{cc}
(BC)^d&0\\
DC[(BC)^d]^2&0
\end{array}
\right)\\
=&\left(
\begin{array}{cc}
(BC)^{\pi}&0\\
-DC(BC)^d&I
\end{array}
\right)\left(
\begin{array}{cc}
A(BC)^d&0\\
D^2C[(BC)^d]^2&0
\end{array}
\right)\\
=&0,\\
&(PQ)^{\pi}PQP\\
=&\left(
\begin{array}{cc}
(BC)^{\pi}&0\\
-DC(BC)^d&I
\end{array}
\right)\left(
\begin{array}{cc}
BC&0\\
DC&0
\end{array}
\right)\left(
\begin{array}{cc}
A&B\\
0&D
\end{array}
\right)\\
=&\left(
\begin{array}{cc}
(BC)^{\pi}BCA&(BC)^{\pi}BCB\\
DC(BC)^{\pi}A&DC(BC)^{\pi}B
\end{array}
\right)\\
=&0.
\end{array}$$
This completes the proof by Theorem 3.1.\end{proof}

\begin{cor} If $(BC)^{\pi}A(BC)^d=0,$ $(BC)^{\pi}BCA=0, (BC)^{\pi}$ $BCB=0, BDC=0$ and $BD^2=0$, then $M$ has g-Drazin inverse.\end{cor}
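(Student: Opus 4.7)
The plan is to mirror the proof of Corollary~4.2, using Theorem~4.3 in place of Theorem~4.1 to handle the subblock of $M$ lacking $D$, and then absorb the $D$-piece by the same splitting argument.

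Concretely, I would write $M = P + Q$ with
$$P = \left(\begin{array}{cc} A & B \\ C & 0 \end{array}\right), \qquad Q = \left(\begin{array}{cc} 0 & 0 \\ 0 & D \end{array}\right).$$
First I would show $P$ has g-Drazin inverse by applying Theorem~4.3 with the $(2,2)$ block set to zero. The hypotheses $(BC)^{\pi}A(BC)^d = 0$, $(BC)^{\pi}BCA = 0$, $(BC)^{\pi}BCB = 0$ of the present corollary are exactly those required by Theorem~4.3 on the upper block, while the two conditions $DCA = 0$ and $DCB = 0$ of that theorem become vacuous because the lower-right block is $0$. So $P \in \mathcal{L}(X\oplus Y)^d$.

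Next I would compute
$$PQ = \left(\begin{array}{cc} 0 & BD \\ 0 & 0 \end{array}\right),$$
from which $PQ^2 = \left(\begin{smallmatrix} 0 & BD^2 \\ 0 & 0\end{smallmatrix}\right) = 0$ by the hypothesis $BD^2 = 0$, and $PQP = \left(\begin{smallmatrix} BDC & 0 \\ 0 & 0\end{smallmatrix}\right) = 0$ by $BDC = 0$. Also $Q$ is g-Drazin invertible since $D$ is. Exactly as in the proof of Corollary~4.2, these identities together with the g-Drazin invertibility of $P$ and $Q$ let me invoke~\cite[Theorem 2.4]{CM1} to conclude $M = P + Q \in \mathcal{L}(X\oplus Y)^d$.

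I do not expect any real obstacle here: the only thing to be a bit careful about is that Theorem~4.3 is being applied with $D = 0$, so I would briefly note in the write-up that the last two hypotheses of Theorem~4.3 (those involving $D$) degenerate to $0 = 0$ and hence impose no condition on $P$. Everything else is a routine block-matrix computation in parallel with Corollary~4.2.
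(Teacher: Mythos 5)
Your proposal is correct and is essentially identical to the paper's own argument: the paper also splits $M=P+Q$ with $P=\left(\begin{smallmatrix} A&B\\ C&0\end{smallmatrix}\right)$ and $Q=\left(\begin{smallmatrix} 0&0\\ 0&D\end{smallmatrix}\right)$, applies Theorem 4.3 (with vanishing lower-right block) to get $P\in\mathcal{L}(X\oplus Y)^d$, and then uses $PQ^2=0$, $PQP=0$ together with \cite[Theorem 2.4]{CM1} exactly as in Corollary 4.2. Your write-up is in fact slightly more explicit than the paper's, which simply refers back to the proof of Corollary 4.2 for the final step.
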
\begin{proof} Write $M=P+Q$,  where
$$P=\left(
\begin{array}{cc}
A&B\\
C&0
\end{array}
\right), Q=\left(
  \begin{array}{cc}
    0 & 0 \\
    0 & D
  \end{array}
\right).$$ In light of Theorem 4.3, $P$ is g-Drazin inverse. As in the proof of Corollary 4.2,
$M$ has g-Drazin inverse.\end{proof}

\begin{thm} If $(CB)^{\pi}CABC=0,A(BC)^{\pi}ABC=0,ABD=0$ and $CBD=0$, then $M$ has g-Drazin inverse.\end{thm}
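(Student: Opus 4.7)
The plan is to follow the strategy of Theorems 4.1 and 4.3 by splitting $M=P+Q$ and invoking Theorem 3.3. Since both hypotheses $ABD=0$ and $CBD=0$ have the factor $BD$ on the right, I would put $B$ and $D$ together in the second column:
$$P=\left(\begin{array}{cc} A & 0\\ C & 0\end{array}\right), \quad Q=\left(\begin{array}{cc} 0 & B\\ 0 & D\end{array}\right).$$
Writing $P=\left(\begin{array}{c}A\\C\end{array}\right)(I,0)$, $Q=\left(\begin{array}{c}B\\D\end{array}\right)(0,I)$ and $PQ=\left(\begin{array}{c}AB\\CB\end{array}\right)(0,I)$, Cline's formula reduces the g-Drazin invertibility of $P$, $Q$ and $PQ$ to that of $A$, $D$ and $CB$. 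All three are guaranteed by the standing assumptions, noting that $CB\in\mathcal{L}(Y)^d$ follows from Cline applied to $BC\in\mathcal{L}(X)^d$.

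The first condition of Theorem 3.3 is immediate: a direct block computation gives
$$PQ^2=\left(\begin{array}{cc} 0 & ABD\\ 0 & CBD\end{array}\right)=0.$$

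The crucial step is verifying $(PQ)^\pi P^2QP=0$. Standard formulas yield
$$(PQ)^d=\left(\begin{array}{cc} 0 & AB((CB)^d)^2\\ 0 & (CB)^d\end{array}\right), \quad (PQ)^\pi=\left(\begin{array}{cc} I & -AB(CB)^d\\ 0 & (CB)^\pi\end{array}\right),$$
together with $P^2QP=\left(\begin{array}{cc} A^2BC & 0\\ CABC & 0\end{array}\right)$. The $(2,1)$-entry of the product $(PQ)^\pi P^2QP$ is then $(CB)^\pi CABC$, which vanishes by hypothesis. The $(1,1)$-entry reads $A^2BC-AB(CB)^d CABC$; using the Cline-type identity $(CB)^dC=C(BC)^d$ (an immediate consequence of $(CB)^d=C((BC)^d)^2B$ together with $(BC)^d(BC)(BC)^d=(BC)^d$), this rearranges to $A^2BC-ABC(BC)^d ABC=A(BC)^\pi ABC$, which is zero by the second hypothesis. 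With both conditions of Theorem 3.3 verified, we conclude $M=P+Q\in\mathcal{A}^d$.

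The main obstacle is the $(1,1)$-block simplification, which relies on the identity $(CB)^dC=C(BC)^d$ to convert between the two spectral projections $(CB)^\pi$ and $(BC)^\pi$ appearing in the hypotheses; the rest is routine block arithmetic together with Cline's formula for the preliminary g-Drazin invertibility checks.
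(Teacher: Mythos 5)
Your proof is correct and follows the paper's own argument exactly: the same splitting $P=\left(\begin{smallmatrix} A&0\\ C&0\end{smallmatrix}\right)$, $Q=\left(\begin{smallmatrix} 0&B\\ 0&D\end{smallmatrix}\right)$, the same reduction to Theorem 3.3 via $PQ^2=0$ and $(PQ)^{\pi}P^2QP=0$. In fact you supply a detail the paper leaves implicit, namely the simplification of the $(1,1)$-entry using $(CB)^dC=C(BC)^d$ to recover the hypothesis $A(BC)^{\pi}ABC=0$, and the Cline-formula checks that $P$, $Q$, $PQ$ are g-Drazin invertible.
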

\begin{proof} Write $M=P+Q,$ where $$P=\left(
\begin{array}{cc}
A&0\\
C&0
\end{array}
\right), Q=\left(
\begin{array}{cc}
0&B\\
0&D
\end{array}
\right).$$ Then $$
\begin{array}{rll}
PQ^2&=&\left(
\begin{array}{cc}
A&0\\
C&0
\end{array}
\right)\left(
\begin{array}{cc}
0&B\\
0&D
\end{array}
\right)^2\\
&=&\left(
\begin{array}{cc}
0&ABD\\
0&CBD
\end{array}
\right),\\
(PQ)^{\pi}P^2QP&=&\left(
\begin{array}{cc}
0&AB\\
0&CB
\end{array}
\right)^{\pi}\left(
\begin{array}{cc}
A&0\\
C&0
\end{array}
\right)^2\left(
\begin{array}{cc}
0&B\\
0&D
\end{array}
\right)\left(
\begin{array}{cc}
A&0\\
C&0
\end{array}
\right)\\
&=&\left(
\begin{array}{cc}
I&-AB(CB)^d\\
0&(CB)^{\pi}
\end{array}
\right)\left(
\begin{array}{cc}
A^2BC&0\\
CABC&0
\end{array}
\right).
\end{array}$$ Clearly, $P$ and $Q$ have g-Drazin inverses. Moreover, $PQ^2=0$ and $Q^{\pi}P^2QP=0$, and therefore we complete the proof by Theorem 3.3.\end{proof}

We now generalize ~\cite[Theorem 2.2]{D}  as follow.

\begin{cor} If $(CB)^{\pi}CABC=0, A(BC)^{\pi}ABC=0, BDC=0$ and $BD^2=0$, then $M$ has g-Drazin inverse.
\end{cor}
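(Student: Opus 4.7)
The plan is to mirror the strategy used in Corollaries 4.2 and 4.4. I would decompose $M=P+Q$, where
$$P=\left(\begin{array}{cc} A & B \\ C & 0 \end{array}\right),\qquad Q=\left(\begin{array}{cc} 0 & 0 \\ 0 & D \end{array}\right),$$
and reduce the problem to verifying that both $P$ and $Q$ are g-Drazin invertible and that they satisfy the hypotheses of the additive criterion \cite[Theorem 2.4]{CM1}.

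First I would apply Theorem 4.5 to the operator $P$. The conditions $(CB)^{\pi}CABC=0$ and $A(BC)^{\pi}ABC=0$ are handed to us by the statement of the corollary, while the two remaining hypotheses of Theorem 4.5 --- the $D$-dependent equations $ABD=0$ and $CBD=0$ --- collapse to triviality once the $(2,2)$-block of $P$ is set to zero. This yields $P\in\mathcal{L}(X\oplus Y)^d$. The operator $Q$ is obviously g-Drazin invertible because $D$ is, by the standing assumption at the start of the section.

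Next I would verify the two multiplicative conditions needed for the additive step by a direct block computation. Since the left column of $Q$ vanishes, $PQ=\left(\begin{array}{cc} 0 & BD \\ 0 & 0 \end{array}\right)$, so $PQ^2$ reduces to its $(1,2)$-entry $BD^2$ and $PQP$ reduces to its $(1,1)$-entry $BDC$. The hypotheses $BD^2=0$ and $BDC=0$ of the corollary then give $PQ^2=0$ and $PQP=0$, and \cite[Theorem 2.4]{CM1} produces the g-Drazin inverse of $M=P+Q$.

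The only point requiring any thought is checking that Theorem 4.5 really applies to $P$ with only the four hypotheses supplied by the corollary --- i.e.\ that the two $D$-involving conditions in Theorem 4.5 become vacuous when $D$ is replaced by $0$. Once this observation is made, the argument is a direct transcription of the template established in Corollaries 4.2 and 4.4, and no serious obstacle is anticipated.
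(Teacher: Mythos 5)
Your proposal is correct and coincides with the paper's own argument: the paper proves this corollary by the same decomposition $M=P+Q$ with $P=\left(\begin{smallmatrix} A & B \\ C & 0\end{smallmatrix}\right)$ and $Q=\left(\begin{smallmatrix} 0 & 0 \\ 0 & D\end{smallmatrix}\right)$, applying Theorem 4.5 to $P$ (where the $D$-conditions are vacuous) and then \cite[Theorem 2.4]{CM1} via $PQ^2=0$ and $PQP=0$, exactly as in Corollary 4.2. Your block computations of $PQ^2$ and $PQP$ are accurate, so there is nothing to add.
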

\begin{proof} As in the proof of Corollary 4.2, we are through by Theorem 4.5.\end{proof}

\vskip10mm


\begin{thebibliography}{99} \bibitem{B} C. Bu; K. Zhang and J. Zhao, Representation of the Drazin inverse on solution of a class singular differential equations, {\it Linear Multilinear Algebra}, {\bf 59}(2011), 863--877.

\bibitem{CM1} H. Chen and M. Sheibani, G-Drazin inverses for operator matrices, {\it Operators and Matrices}, {\bf 14}(2020), 23--31.

\bibitem{CC} J. Cui and J. Chen, When is a
$2\times 2$ matrix ring over a commutative local ring quasipolar?
{\it Comm. Algebra}, {\bf 39}(2011), 3212--3221.

\bibitem{CC1} J. Cui and J. Chen, Quasipolar
triangular matrix rings over local rings, {\it Comm. Algebra},
{\bf 40}(2012), 784--794.

\bibitem{D} E. Dopazo and M.F. Mart\'inez-Serrano, Further results on the representation of the Drazin inverse of a $2\times 2$ block matrix,
{\it Linear Algebra Appl.}, {\bf 432}(2010), 1896--1904.

\bibitem{LC} Y. Liao; J. Chen and J. Cui, Cline's formula for generalized Drazin inverse, {\it Bull. Malays. Math. Sci. Soc.}, {bf 37}(2014), 37-42.

\bibitem{M} D. Mosi\'c, A note on Cline's formula for the generalized Drazin inverse, {\it Linear Multilinear Algebra},
{\bf 63}(2014), 1106--1110.

\bibitem{P} P. Patric\'io and R.E. Hartwig, The $(2,2,0)$ Drazin inverse problem, {\it Linear Algebra Appl.}, {\bf 437}(2012), 2755--2772.

\bibitem{S} A. Shakoor; I. Ali; S. Wali and A. Rehman, Some formulas on the Drazin inverse for the sum of two matrices and block matrices,
{\it Bull. Iran. Math. Soc.}, 2021, https://doi.org/10.1007/s41980-020-00521-3.

\bibitem{SZ} L. Sun; B. Zheng; S. Bai and C. Bu, Formulas for the Drazin inverse of matrices over skew fields,
{\it Filomat}, {\bf 30}(2016), 3377--3388.

\bibitem{X} L. Xia and B. Deng, The Drazin inverse of the sum of two matrices and its applications,
 {\it Filomat}, {\bf 31} (2017), 5151-5158.

\bibitem{XS} Q. Xu; C. Song and L. Zhang, Solvability of certain quadratic operator equations and representations of Drazin inverse,
{\it Linear Algebra Appl.}, {\bf 439} (2013), 291--309.

\bibitem{YL} H. Yang and X. Liu, The Drazin inverse of the sum of two matrices and its applications,
{\it J. Comput. Applied Math.}, {\bf 235}(2011), 1412--1417.

\bibitem{Y} A. Yu; X. Wang and C. Deng, On the Drazin inverse of an anti-triangular block matrix,
{\it . Linear Algebra Appl.}, {\bf 489}(2016), 274--287.

\bibitem{Z} D. Zhang, Representations for generalized Drazin inverse of operator matrices over a Banach space,
{\it Turk. J. Math.}, {\bf 40}(2016), 428--437.

\bibitem{ZM} D. Zhang and D. Mosi\'{c}, Explicit formulae for the generalized Drazin inverse of block matrices over a Banach algebra,
{\it Filomat}, {\bf 32}(2018), 5907--5917.

\bibitem{ZCM} H. Zou; J. Chen and D. Mosi\'c, The Drazin invertibility of an anti-triangular matrix over a ring, {\it Studia Scient. Math. Hungar.}, {\bf 54}(2017), 489--508.

\bibitem{ZC} H. Zou; D. Mosi\'c and J. Chen, Generalized Drazin invertibility of the product and sum of two elements in a Banach algebra and its applications, {\it Turk. J. Math.}, {\bf 41}(2017), 548--563.

\end{thebibliography}
\end{document}